\documentclass[]{elsarticle}
\usepackage{amssymb, amsmath, mathrsfs, amsthm, enumerate, hyperref, mathtools}

\numberwithin{equation}{section}
\bibliographystyle{alpha}

\newcommand\scalemath[2]{\scalebox{#1}{\mbox{\ensuremath{\displaystyle #2}}}}

\theoremstyle{definition}
\newtheorem{define}{Definition}[section]
\newtheorem{thm}[define]{Theorem}
\newtheorem{lem}[define]{Lemma}
\newtheorem{rmk}[define]{Remark}
\newtheorem{prop}[define]{Proposition}
\newtheorem{cor}[define]{Corollary}

\begin{document}

\begin{frontmatter}

\title{Nice elliptic curves}
\author{Duc Van Huynh}
\ead{dhuynh@georgiasouthern.edu}
\address{Department of Mathematical Sciences, Georgia Southern University, Savannah, GA 31419, USA}

\date{\today}

\begin{abstract}
In this work, we study elliptic curves of the form $E_L: y^2 = x(x-1)(x-L)$, where $L^2-L+1 \in \left(\mathbb{Q}^{\times}\right)^2$ and $L \in \mathbb{Q} \setminus \{0,1\}$. We will show that for almost all quadratic extensions $K/\mathbb{Q}$ except one we have $E_L(K) \cong \mathbb{Z}/2\mathbb{Z} \oplus \mathbb{Z}/2\mathbb{Z}$.
\end{abstract}

\end{frontmatter}

\section{Introduction and results}
There is a $6$ to $1$ Galois cover $\lambda: X(2) \rightarrow X(1)$ of modular curves ramifying above $j=0$ and $j=1728$ \cite[Proposition III.1.7]{silverman1}, where $\lambda$ is the modular lambda function. Each point of $\left(\mathrm{im}\, \lambda \cap \mathbb{Q} \right)$ corresponds to a family of isomorphic elliptic curves with the $\mathbb{Q}$-rational points containing a copy of Klein Four. However, we show that if further that $\lambda^2 - \lambda + 1 \in \mathbb{Q}^2$, then the associated elliptic curves possess much more explicit arithmetic data.

We say that a degree $3$ monic polynomial $f(x)$ in $\mathbb{Q}[x]$ is nice if $f(x)$ and its derivative $f'(x)$ split completely as linear factors in $\mathbb{Q}[x]$. We will assume that each nice polynomial $f(x)$ has distinct roots. Such polynomials have been completely described (see \cite{bd}). For nice polynomials of higher degree with distinct roots, see \citep{ajai}. In this paper, we study elliptic curves of the form
\begin{equation}
E_L: y^2 = f_L(x)=x(x-1)(x-L),
\end{equation}
where $f_L(x)$ is a nice polynomial with distinct roots. Since $f_L(x)$ has distinct roots, $E$ is nonsingular.  Note that $f'(x)$ splits completely over $\mathbb{Q}$ exactly when $L^2 - L + 1 \in \mathbb{Q}^2$. Henceforth, we will call $E_L$ a \textit{nice elliptic curve} if $L \in \mathbb{Q} - \{0,1\}$ such that $L^2 - L + 1 \in \mathbb{Q}^2$.

The results of this paper are stated in the theorem below.

\begin{thm} \label{main} Let $E_L: y^2 = f_L(x)$ be a nice elliptic curve, where $f_L$ is a nice polynomial of the form $f_L(x) = x(x-1)(x-L)$. Let $r$ and $s$ be the two roots of the derivative $f'(x)$ with $r < s$. Then we have the following:
\begin{enumerate}[(i)]
\item The curve $E_L$ is a nice elliptic curve exactly when 
\[L = \dfrac{t^2-1}{2t-1},\]
where $t \in \mathbb{Q} - \{0, 1/2, 2, \pm 1\}$.
\item For a quadratic extension $K/\mathbb{Q}$, the torsion subgroup $T_L(K)$ of $E_L(K)$ is isomorphic to
\[ T_L(K) \cong \begin{cases} 
      \mathbb{Z}/2\mathbb{Z} \oplus \mathbb{Z}/2\mathbb{Z} & \text{ for } K \neq \mathbb{Q}(\sqrt{1-L}) \\
      \mathbb{Z}/4\mathbb{Z} \oplus \mathbb{Z}/2\mathbb{Z} & \text{ for } K = \mathbb{Q}(\sqrt{1-L}) 
   \end{cases}
\]

\item If $L$ is defined by 
\[L = \dfrac{(u^2+3)(u^2-1)}{4u^2},\]
where $u \in \mathbb{Q} - \{0, \pm 1\}$, then $(s, \sqrt{f_L(s)})$ is a point of infinite order.
\item With $L$ defined as in part (ii), the rank of $E_L(\mathbb{Q}(\sqrt{-3}))$ is at least $2$, where two of the linearly independent points of infinite order are $(r,\sqrt{f_L(r)})$ and $(s,\sqrt{f_L(s)})$.
\end{enumerate}
\end{thm}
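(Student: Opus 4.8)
The plan is to exhibit the two named points explicitly over $\mathbb{Q}(\sqrt{-3})$ and then deduce their independence from the Galois action of $\mathrm{Gal}(\mathbb{Q}(\sqrt{-3})/\mathbb{Q})$. Write $P=(s,\sqrt{f_L(s)})$ and $Q=(r,\sqrt{f_L(r)})$. By part (iii) the point $P$ lies in $E_L(\mathbb{Q})$ and has infinite order, so in particular $f_L(s)\in(\mathbb{Q}^\times)^2$. To locate $Q$, I would use the relations $r+s=\tfrac{2(1+L)}{3}$ and $rs=\tfrac{L}{3}$ coming from $f'_L(x)=3x^2-2(1+L)x+L$ to compute the product of critical values,
\[
f_L(r)\,f_L(s)=(rs)\big((r-1)(s-1)\big)\big((r-L)(s-L)\big)=\frac{L}{3}\cdot\frac{1-L}{3}\cdot\frac{L(L-1)}{3}=-\frac{[L(L-1)]^2}{27}.
\]
Since $-\tfrac{1}{27}=-3\cdot(\tfrac{1}{9})^2$, this product lies in $-3\,(\mathbb{Q}^\times)^2$; combined with $f_L(s)\in(\mathbb{Q}^\times)^2$ this forces $f_L(r)\in -3\,(\mathbb{Q}^\times)^2$. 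Hence $\sqrt{f_L(r)}\in\mathbb{Q}(\sqrt{-3})\setminus\mathbb{Q}$ and $Q\in E_L(\mathbb{Q}(\sqrt{-3}))$ is a genuinely non-rational point.

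Next I would argue that $Q$ has infinite order. Over $K=\mathbb{Q}(\sqrt{-3})$ part (ii) gives $T_L(K)\cong\mathbb{Z}/2\mathbb{Z}\oplus\mathbb{Z}/2\mathbb{Z}$ whenever $K\neq\mathbb{Q}(\sqrt{1-L})$; in that case the only torsion points are the three $2$-torsion points $(0,0),(1,0),(L,0)$ together with $O$, all with vanishing $y$-coordinate. As $r$ is a critical point of $f_L$ distinct from its roots (a root that were critical would be a double root, contradicting simplicity), we have $f_L(r)\neq0$, so $Q$ has nonzero $y$-coordinate and cannot be torsion. For the finitely many $u$ (if any) for which $K=\mathbb{Q}(\sqrt{1-L})$, so that $T_L(K)\cong\mathbb{Z}/4\mathbb{Z}\oplus\mathbb{Z}/2\mathbb{Z}$, I would instead rule out $Q$ being the extra order-$4$ point by comparing $x$-coordinates: the $x$-coordinate of a $4$-torsion point is one of the finitely many explicit values produced by the doubling formula over the $2$-torsion, and one checks that $r$ is not among them.

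Finally, independence follows cleanly from Galois. Let $\sigma$ generate $\mathrm{Gal}(K/\mathbb{Q})$. Because $P$ is rational, $\sigma(P)=P$; because $f_L(r)\in-3\,(\mathbb{Q}^\times)^2$ we may write $\sqrt{f_L(r)}=w\sqrt{-3}$ with $w\in\mathbb{Q}^\times$, whence $\sigma(\sqrt{f_L(r)})=-\sqrt{f_L(r)}$ and therefore $\sigma(Q)=(r,-\sqrt{f_L(r)})=-Q$. Suppose $aP+bQ$ is a torsion point for integers $a,b$. Applying $\sigma$ and using that Galois preserves torsion, $aP-bQ$ is also torsion, so both $2aP$ and $2bQ$ are torsion; since $P$ and $Q$ have infinite order this forces $a=b=0$. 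Thus $P$ and $Q$ are independent in $E_L(K)$ modulo torsion and the rank is at least $2$. The main obstacle is the infinite-order claim for $Q$—in particular handling the exceptional field $\mathbb{Q}(\sqrt{1-L})$—whereas the square-class computation placing $Q$ over $\mathbb{Q}(\sqrt{-3})$ and the Galois independence argument are both short once that point is in place.
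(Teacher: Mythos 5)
Your proposal addresses only part (iv) of the theorem, and it does so by invoking parts (ii) and (iii) as known facts. Since the statement to be proved is the whole of Theorem \ref{main}, this leaves the bulk of the content unproven — and part (ii) is precisely the paper's main technical result. Establishing it requires (a) showing that no nice $L$ has $L$, $1-L$, or $L^2-L$ a rational square, which the paper does by determining all rational points on three auxiliary genus-one curves, so that the duplication formula rules out $4$-torsion except over $\mathbb{Q}(\sqrt{1-L})$; (b) showing the $3$-division polynomial $3x^4-4(1+L)x^3+6Lx^2-L^2$ has no rational root, which the paper reduces to finding all rational points on a genus-$3$ hyperelliptic curve via an \'etale double cover onto a genus-$2$ curve of Mordell--Weil rank $1$ and the Chabauty--Coleman method; and (c) the twisting isomorphism $E(\mathbb{Q}(\sqrt{D}))[n]\cong E(\mathbb{Q})[n]\oplus E^D(\mathbb{Q})[n]$ for odd $n$ to pass from $\mathbb{Q}$ to arbitrary quadratic fields. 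None of these ideas appears in your write-up, and part (iv) cannot be made unconditional without them, since your infinite-order and independence arguments both lean on the torsion classification of part (ii). Parts (i) and (iii) are easier (a conic parametrization and a direct verification), but they too are asserted rather than argued.

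On the portion you do treat, your argument is correct and in one respect more complete than the paper's. The computation $f_L(r)f_L(s)=-[L(L-1)]^2/27\in-3(\mathbb{Q}^\times)^2$, combined with $f_L(s)\in(\mathbb{Q}^\times)^2$, gives an explicit reason why $(r,\sqrt{f_L(r)})$ lies in $\mathbb{Q}(\sqrt{-3})$, a fact the paper only asserts; and you are right to flag the exceptional possibility $\mathbb{Q}(\sqrt{1-L})=\mathbb{Q}(\sqrt{-3})$, where the torsion is $\mathbb{Z}/4\mathbb{Z}\oplus\mathbb{Z}/2\mathbb{Z}$ — there the order-$4$ points have irrational abscissae $1\pm\sqrt{1-L}$ (irrational by the paper's Lemma \ref{4t1}), while $r\in\mathbb{Q}$, so the point is still non-torsion. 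Your Galois independence step ($\sigma(P)=P$, $\sigma(Q)=-Q$, hence $aP+bQ$ torsion forces $2aP$ and $2bQ$ torsion) matches the paper's argument and is stated slightly more carefully, working modulo torsion rather than assuming an exact relation. But as a proof of the theorem as stated, the missing parts (i)--(iii) constitute a genuine and large gap.
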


To show that the torsion subgroup of $E_L(\mathbb{Q})$ is isomorphic $\mathbb{Z}/2\mathbb{Z} \oplus \mathbb{Z}/2\mathbb{Z}$, it suffices to show that $E_L(\mathbb{Q})$ lacks non-trivial $4$-torsion and $3$-torsion. Showing that it lacks non-trivial $4$-torsion is straightforward; however, showing that it lacks non-trivial $3$-torsion requires Chabauty-Coleman's Method. We show that $E_L(\mathbb{Q})$ does not have a point of order $3$ by showing that the $3$-divisional polynomial does not have a rational root, which is equivalent to showing that a certain hyperelliptic curve of genus $3$ has only $12$ points. Admittedly, this route seems adhoc but an injection of torsion into finite fields does not seem feasible.

We then show that $E_L$ does not acquire more points when extended to any quadratic extension over $\mathbb{Q}$ except one. As all torsion points of $E_L(\mathbb{Q})$ are of the form $(t,0)$ for some integer $t$, points of infinite order can be readily found, that is, points of infinite order can be constructed via a quadratic twist or by extending to a quadratic extension. We will focus on the points of infinite order given by the roots of $f'(x)$ as they always live in $\mathbb{Q}(\sqrt{-3})$.

\section{Overview}
In Section \ref{rational}, we will describe how to explicitly construct nice elliptic curves. Then we prove that $E_L(\mathbb{Q})$ does not have a point of order $4$ or $3$. In Section \ref{chab}, we will provide an overview of Chabauty-Coleman's Method. In Section \ref{proof}, we will use Chabauty-Coleman's Method to complete the proof from Section \ref{rational} that $E_L(\mathbb{Q})$ does not have a point of order $3$. Finally, in Section \ref{galois} we will provide some words on Galois representation of nice elliptic curves.

\section{Rational points} \label{rational}
Let $f_L(x) = x(x-1)(x-L)$ be a nice cubic polynomial with distinct roots. As $f_L(x)$ has distinct roots, $L \neq 0,1$. Furthermore, for $f'(x)$ to split over $\mathbb{Q}$, it must be that $L^2 - L + 1 \in \mathbb{Q}^2$. We can explicitly parametrize $L$.

\begin{prop} \label{L} Let $V = \{(L,W): L^2 - L + 1 = W^2 \text{ and } L,W \in \mathbb{Q}\}$. The map $\varphi: \mathbb{Q} -\{1/2\} \rightarrow V$ given by
\begin{equation}
\varphi(s) = \left(\dfrac{s^2-1}{2s-1},\dfrac{s^2-s+1}{2s-1}\right)
\end{equation}
is a one-to-one correspondence.
\end{prop}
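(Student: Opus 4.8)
The plan is to prove the claim by exhibiting an explicit two-sided inverse for $\varphi$, which is the most efficient route here. The conic $C: W^2 = L^2 - L + 1$ is a smooth rational curve, so I expect a clean rational parametrization; the content of the proposition is that this particular $\varphi$ is such a parametrization and that it is a genuine bijection onto the affine rational points $V$, with no point of $V$ missed and no two values of $s$ identified.

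First I would check that $\varphi$ is well-defined, i.e. that $\varphi(s) \in V$ for every $s \neq 1/2$. Writing $L = (s^2-1)/(2s-1)$ and $W = (s^2-s+1)/(2s-1)$, one clears denominators and verifies the polynomial identity
\[(s^2-1)^2 - (s^2-1)(2s-1) + (2s-1)^2 = (s^2 - s + 1)^2,\]
which is exactly the assertion $L^2 - L + 1 = W^2$. Both sides expand to $s^4 - 2s^3 + 3s^2 - 2s + 1$, so this step is routine.

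The heart of the argument is the observation that
\[L + W = \frac{(s^2 - 1) + (s^2 - s + 1)}{2s - 1} = \frac{2s^2 - s}{2s - 1} = s.\]
This suggests defining $\psi : V \to \mathbb{Q}$ by $\psi(L, W) = L + W$, and I would prove that $\psi$ is a two-sided inverse of $\varphi$. The displayed computation already gives $\psi \circ \varphi = \mathrm{id}$ on $\mathbb{Q} - \{1/2\}$, which proves $\varphi$ is injective. For surjectivity I would take $(L, W) \in V$, set $s = L + W$, and verify $\varphi(s) = (L, W)$: substituting $s = L + W$ and repeatedly reducing with the relation $W^2 = L^2 - L + 1$ yields $s^2 - 1 = L(2s-1)$ and $s^2 - s + 1 = W(2s-1)$, which recover the two coordinates.

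The one point requiring care — and the only real obstacle — is the bookkeeping around the excluded value $s = 1/2$, the pole of $\varphi$. I must confirm that $\psi$ actually lands in $\mathbb{Q} - \{1/2\}$, i.e. that no rational point of $C$ satisfies $L + W = 1/2$: indeed $W = 1/2 - L$ together with $W^2 = L^2 - L + 1$ forces $1/4 = 1$, a contradiction. Geometrically this is transparent, since the line $W + L = 1/2$ is precisely one of the two asymptotes of the hyperbola $C$ and so meets it only at infinity; this is exactly why $\varphi$ omits no affine point, and it is what upgrades the two compositions $\psi \circ \varphi = \mathrm{id}$ and $\varphi \circ \psi = \mathrm{id}$ into a genuine one-to-one correspondence.
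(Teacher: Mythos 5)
Your proof is correct and complete, but it takes a slightly different (and more explicit) route than the paper. The paper's entire proof is one sentence: $V$ is a genus $0$ curve with rational point $P_0=(0,1)$, and "projecting away from $P_0$ gives the desired correspondence" --- the standard chord method for conics, with all verification left to the reader. You instead exhibit the explicit two-sided inverse $\psi(L,W)=L+W$ and check every identity. Geometrically your inverse is also a projection, but from a different center: the lines $L+W=s$ are the pencil through the rational point at infinity $[1:-1:0]$ of the projective closure of $V$, and the excluded value $s=1/2$ corresponds, as you observe, to the tangent line (asymptote) at that center. Your choice has the advantage of matching the displayed formula directly --- projection from $(0,1)$ with the naive slope parameter $t$ yields $\left(\tfrac{2t+1}{1-t^2},\tfrac{t^2+t+1}{1-t^2}\right)$ and only recovers the stated $\varphi$ after the M\"obius reparametrization $t=(s-2)/(s+1)$, a step the paper silently elides. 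Your version also makes the bijectivity claim airtight, since you verify both that $\psi$ lands in $\mathbb{Q}-\{1/2\}$ and that both composites are the identity, whereas the paper's appeal to the general theory leaves the reader to confirm that no affine rational point is missed. What the paper's approach buys is brevity and the conceptual framing (genus $0$ plus a rational point implies rational parametrization); what yours buys is a self-contained, checkable computation.
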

\begin{proof}
The variety $V$ is a genus $0$ curve with the rational point $P_0=(0,1)$. Projecting away from $P_0$ gives the desired correspondence.
\end{proof}

Recall that we say $E_L$ is a nice elliptic curve if $E_L$ is defined by
\begin{equation}
E_L: y^2 = f_L(x) = x(x-1)(x-L),
\end{equation}
where $L^2 - L + 1 \in \mathbb{Q}^2 - \{0,1\}$. With Proposition \ref{L}, we can describe $E_L$ explicitly.
\begin{lem} The curve $E_L$ is a nice elliptic curve exactly when 
\[L = \dfrac{s^2-1}{2s-1}\]
where $s \neq 0,1/2, 2,\pm 1$.
\end{lem}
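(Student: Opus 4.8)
The plan is to reduce the statement directly to Proposition \ref{L} and then account for the degeneracy conditions. By definition, $E_L$ is a nice elliptic curve precisely when $L \in \mathbb{Q} - \{0,1\}$ and $L^2 - L + 1 \in \mathbb{Q}^2$. The key observation is that this latter condition says exactly that there exists $W \in \mathbb{Q}$ with $(L,W) \in V$. Since the quadratic $L^2 - L + 1$ has discriminant $-3 < 0$, it is strictly positive for every rational $L$, so whenever it is a square it is the square of a \emph{nonzero} rational; there is thus no need to worry separately about $W = 0$. Consequently, the set of $L$ satisfying $L^2 - L + 1 \in \mathbb{Q}^2$ is precisely the image of $V$ under projection to the first coordinate.

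First I would invoke Proposition \ref{L}: since $\varphi \colon \mathbb{Q} - \{1/2\} \to V$ is a bijection, every $(L,W) \in V$ arises as $\varphi(s)$ for a unique $s \in \mathbb{Q} - \{1/2\}$, and in particular
\[
L = \frac{s^2-1}{2s-1}.
\]
Thus every $L$ with $L^2 - L + 1 \in \mathbb{Q}^2$ is of this form, and conversely every $L$ of this form satisfies the square condition. It remains only to impose $L \neq 0$ and $L \neq 1$ and to translate these into conditions on $s$. Solving $\frac{s^2-1}{2s-1} = 0$ gives $s = \pm 1$, and solving $\frac{s^2-1}{2s-1} = 1$, i.e.\ $s^2 - 2s = 0$, gives $s = 0$ or $s = 2$. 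Together with the excluded point $s = 1/2$, where $\varphi$ is undefined, this produces exactly the forbidden set $\{0, 1/2, 2, \pm 1\}$.

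The argument is essentially bookkeeping, so there is no serious obstacle; the one point requiring care is ensuring that the exclusions are neither too large nor too small. Here the injectivity half of Proposition \ref{L} does the work: because $\varphi$ is one-to-one, the values $L = 0$ and $L = 1$ each arise from exactly the $s$-values computed above and from no others, so discarding those $s$ removes no genuinely nice curve. The bijectivity therefore guarantees that $L = \frac{s^2-1}{2s-1}$ with $s \in \mathbb{Q} - \{0, 1/2, 2, \pm 1\}$ characterizes exactly the nice elliptic curves $E_L$, completing the proof.
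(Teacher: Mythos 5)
Your proof is correct and follows exactly the route the paper intends: the paper states this lemma without proof as an immediate consequence of Proposition \ref{L}, and your argument supplies precisely the expected bookkeeping (invoking the bijection $\varphi$ and excluding the $s$-values $\pm 1$, $0$, $2$ that force $L=0$ or $L=1$, together with $s=1/2$). The observation that $L^2-L+1>0$ rules out $W=0$ is a nice touch of care, though not strictly needed.
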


Since the torsion subgroup $T_L(\mathbb{Q})$ of $E_L(\mathbb{Q})$ contains a subgroup isomorphic to $\mathbb{Z}/2\mathbb{Z} \oplus \mathbb{Z}/2\mathbb{Z}$, Mazur's Theorem (\cite{silverman1}, Theorem 7.5) tells us that $T_L(\mathbb{Q})$ is isomorphic to $\mathbb{Z}/2\mathbb{Z} \oplus \mathbb{Z}/2N\mathbb{Z}$, where $N=1,2,3$ or $4$. We will show that $T_L(\mathbb{Q})$ is isomorphic to $\mathbb{Z}/2\mathbb{Z} \oplus \mathbb{Z}/2\mathbb{Z}$ by showing that $T_L(\mathbb{Q})$ does not have a point of order $3$ or $4$. All of the calculations in this section are done using SageMath \cite{sagemath}.

\begin{thm} \label{torsion} For any quadratic extension $K/\mathbb{Q}$ not equal to $\mathbb{Q}(\sqrt{1-L})$, the torsion subgroup $T_L(K)$ of $E_L(K)$ is isomorphic to $\mathbb{Z}/2\mathbb{Z} \oplus \mathbb{Z}/2\mathbb{Z}$, and $T_L(K)$ is isomorphic to $\mathbb{Z}/2\mathbb{Z} \oplus \mathbb{Z}/4\mathbb{Z}$ otherwise.
\end{thm}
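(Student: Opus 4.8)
The plan is to determine $T_L(K)$ by analyzing its odd part and its $2$-power part separately, feeding on two facts established earlier in this section: $E_L(\mathbb{Q})_{\mathrm{tors}}\cong\mathbb{Z}/2\mathbb{Z}\oplus\mathbb{Z}/2\mathbb{Z}$, and (the Chabauty--Coleman input) that the $3$-division polynomial $\psi_3$ of $E_L$ has no rational root. The points $O,(0,0),(1,0),(L,0)$ are rational, so $T_L(K)\supseteq\mathbb{Z}/2\mathbb{Z}\oplus\mathbb{Z}/2\mathbb{Z}$ for every $K$, and the task is to decide exactly how much larger it can be. Throughout I write $K=\mathbb{Q}(\sqrt d)$, let $\sigma$ generate $\mathrm{Gal}(K/\mathbb{Q})$, and let $E_L^{(d)}$ denote the quadratic twist.

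For the odd torsion, the $\sigma$-eigenspace decomposition gives, for odd $m$, the standard isomorphism $E_L(K)[m]\cong E_L(\mathbb{Q})[m]\oplus E_L^{(d)}(\mathbb{Q})[m]$, so it suffices to bound the odd torsion of $E_L$ and of all its twists over $\mathbb{Q}$. The decisive observation is that every twist $E_L^{(d)}$ still has its full $2$-torsion rational (the points with $x\in\{0,1,L\}$ persist), so Mazur's theorem forces $E_L^{(d)}(\mathbb{Q})_{\mathrm{tors}}\cong\mathbb{Z}/2\mathbb{Z}\oplus\mathbb{Z}/2N\mathbb{Z}$ with $N\le 4$; the only odd torsion this permits is a point of order $3$. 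Because the $x$-coordinates of $3$-torsion are twist-invariant (the isomorphism $E_L^{(d)}\to E_L$ over $K$ fixes $x$), a twist can have rational $3$-torsion only if $\psi_3$ has a rational root. Since it has none, no twist — and hence no $E_L(K)$ — acquires odd torsion, for \emph{every} quadratic $K$.

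For the $2$-power torsion I invoke the classical halving criterion: for the model with roots $0,1,L$, the point $(1,0)$ lies in $2E_L(K)$ iff $1-L\in(K^\times)^2$, the point $(0,0)$ iff $-1,-L\in(K^\times)^2$, and $(L,0)$ iff $L,L-1\in(K^\times)^2$. As $E_L(\mathbb{Q})$ has no point of order $4$, $1-L$ is a non-square in $\mathbb{Q}$, so $(1,0)$ first becomes divisible exactly over $K=\mathbb{Q}(\sqrt{1-L})$. To eliminate the other two I must show that $L$, $-L$, and $L-1$ are never rational squares for a nice $L$: substituting $L=a^2$, resp.\ $-L=a^2$ or $L-1=a^2$, into $L^2-L+1=W^2$ reduces each case to the genus-one curves $\beta^2=\alpha^4-\alpha^2+1$ and $\beta^2=\alpha^4+\alpha^2+1$, which I would check in SageMath have only the rational points forcing the excluded $L\in\{0,1\}$. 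Granting this, $-1,-L$ and $L,L-1$ cannot simultaneously be squares in any quadratic field, so $(0,0)$ and $(L,0)$ are never halvable; in particular no $K$ makes all three $2$-torsion points divisible, so the Weil-pairing case $\mathbb{Z}/4\mathbb{Z}\oplus\mathbb{Z}/4\mathbb{Z}$ cannot occur. Thus for $K\ne\mathbb{Q}(\sqrt{1-L})$ there is no $4$-torsion and $T_L(K)\cong\mathbb{Z}/2\mathbb{Z}\oplus\mathbb{Z}/2\mathbb{Z}$.

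Finally, over $K_0=\mathbb{Q}(\sqrt{1-L})$ exactly one $2$-torsion point is halvable, giving at least $\mathbb{Z}/2\mathbb{Z}\oplus\mathbb{Z}/4\mathbb{Z}$, and I must rule out a point of order $8$. A halving point $Q$ of $(1,0)$ has $x(Q)=1\pm\sqrt{1-L}$, and via the $2$-descent map $\delta(x,y)=(x,\,x-1)$ into $(K_0^\times/(K_0^\times)^2)^2$, the point $Q$ is divisible by $2$ only if $x(Q)-1$ is a square in $K_0$; but $N_{K_0/\mathbb{Q}}(x(Q)-1)=L-1$, which is not a square, so $x(Q)-1$ is not a square. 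The same norm computation shows the second $\delta$-coordinate of each of the three translates $Q+(0,0),Q+(1,0),Q+(L,0)$ — the full set of order-$4$ points — is likewise of norm $\equiv L-1$ and hence nontrivial, so no order-$4$ point is divisible by $2$. Hence the $2$-power torsion is $\mathbb{Z}/4\mathbb{Z}\oplus\mathbb{Z}/2\mathbb{Z}$ and, with the odd part already killed, $T_L(K_0)\cong\mathbb{Z}/2\mathbb{Z}\oplus\mathbb{Z}/4\mathbb{Z}$. I expect the crux not to be any single computation but the reduction itself: obtaining a \emph{uniform}, field-independent grip on $3$-torsion, which the twist-plus-Mazur argument converts precisely into the already-proved statement that $\psi_3$ has no rational root; the auxiliary square-class verifications and the norm computation ruling out $8$-torsion are then routine.
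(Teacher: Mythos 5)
Your proposal is correct in substance and splits into a part that coincides with the paper and a part that takes a genuinely different (and in places more complete) route. The odd-torsion argument is exactly the paper's: the decomposition $E_L(K)[m]\cong E_L(\mathbb{Q})[m]\oplus E_L^{(d)}(\mathbb{Q})[m]$ for odd $m$ (Lemma \ref{twist}), the observation that every twist keeps full rational $2$-torsion so Mazur leaves only $3$-torsion as a possible odd contribution, and the Chabauty--Coleman input that $\psi_3$ has no rational root. For the $2$-primary part you diverge: the paper solves the duplication formula explicitly, lists the eight $x$-coordinates $\pm\sqrt{L}$, $1\pm\sqrt{1-L}$, $L\pm\sqrt{L^2-L}$ of order-$4$ points together with the corresponding $y^2$-values, and argues that only the $x=1\pm\sqrt{1-L}$ points lie in a quadratic field; you instead use the halving criterion $(e_i,0)\in 2E(K)\iff e_i-e_j,\,e_i-e_k\in(K^\times)^2$ and a $2$-descent/norm computation. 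Both routes need the same auxiliary square-class facts: beyond $L$, $1-L$, $L^2-L\notin\mathbb{Q}^2$ (Lemmas \ref{4t1} and \ref{4t2}), one must also know $-L\notin\mathbb{Q}^2$ and $L-1\notin\mathbb{Q}^2$. You flag these as a deferred computation (both reduce to $\beta^2=\alpha^4+\alpha^2+1$ having only $\alpha=0$, which is classical and provable exactly as in Lemma \ref{4t2}), whereas the paper absorbs them silently into the claim that the other order-$4$ points live in biquadratic fields; so this is a declared verification, not a gap. Finally, your explicit exclusion of a point of order $8$ over $\mathbb{Q}(\sqrt{1-L})$, via the norm $N_{K_0/\mathbb{Q}}(x(Q)-1)=L-1\notin\mathbb{Q}^2$, is a genuine addition: the paper closes its proof by citing Mazur's theorem, which governs torsion over $\mathbb{Q}$ rather than over $K$, so your descent step (or an appeal to Kenku--Momose) is what is actually needed to cap the $2$-part at $\mathbb{Z}/2\mathbb{Z}\oplus\mathbb{Z}/4\mathbb{Z}$.
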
 

To prove Theorem \ref{torsion}, we will first show that $T_L(K)$ does not have a point of order $4$ or an element of order $3$ over any quadratic extension $K/\mathbb{Q}$ for which $x^2 - (1-L)$ does not split. Then we show the same is true for any quadratic twist $E^D_L$ of $E_L$. Theorem \ref{torsion} follows from the fact that for odd $n$, elements of $E_L(K)[n]$ come from either $E_L(\mathbb{Q})[n]$ or its twist $E_L^D(\mathbb{Q})[n]$.

\begin{lem} \label{4t1}
Let $L \in \mathbb{Q} - \{0,1\}$. If $L^2 - L + 1 \in \mathbb{Q}^2$, then neither $L$ or $1-L$ is a square in $\mathbb{Q}$.
\end{lem}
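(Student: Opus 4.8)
The plan is to use the symmetry $L \mapsto 1-L$, which fixes $L^2-L+1$, to collapse the two assertions into one. Indeed, if $L\in\mathbb{Q}\setminus\{0,1\}$ has $L^2-L+1\in\mathbb{Q}^2$, then $1-L\in\mathbb{Q}\setminus\{0,1\}$ and $(1-L)^2-(1-L)+1 = L^2-L+1\in\mathbb{Q}^2$ as well, so $1-L$ satisfies exactly the same hypotheses. It therefore suffices to prove the single statement that no such $L$ is a rational square; applying it to $1-L$ in place of $L$ then shows $1-L$ is not a square either.

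Suppose for contradiction that $L=v^2$ with $v\in\mathbb{Q}$, and write $L^2-L+1=W^2$. Then $(v,W)$ is a rational point of the curve $C\colon W^2=v^4-v^2+1$, and the goal becomes to show that the only rational points of $C$ have $v\in\{0,\pm1\}$ (which force the excluded values $L\in\{0,1\}$). I would make $C$ tractable by the standard change of variables for even quartics, $u=v^2$, $w=vW$, which carries $C$ two-to-one onto the elliptic curve
\[ E\colon\quad w^2 = u^3-u^2+u = u(u^2-u+1). \]
A rational point of $C$ with $v\neq0$ yields a rational point of $E$ whose $u$-coordinate $u=v^2=L$ is a nonzero square, and conversely such a point of $E$ pulls back to $C$. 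Thus the whole question reduces to computing $E(\mathbb{Q})$ and reading off which points have square $u$-coordinate.

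Determining the torsion of $E$ is routine. Since $u^2-u+1$ is irreducible over $\mathbb{Q}$, the only rational $2$-torsion point is $(0,0)$; as $\#E(\mathbb{F}_5)=8$, the torsion order divides $8$, and having a unique point of order $2$ it is a cyclic $2$-group. The tangent to $E$ at $(1,1)$ is the line $w=u$, which meets $E$ again only at $(0,0)$, so $2\,(1,1)=(0,0)$ and $(1,1)$ has order $4$; moreover $(1,1)$ is not halvable over $\mathbb{Q}$, because the duplication equation $(u^2-1)^2=4u(u^2-u+1)$, i.e.\ $u^4-4u^3+2u^2-4u+1=0$, has no rational root. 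Hence the torsion is exactly $\mathbb{Z}/4\mathbb{Z}=\{O,(1,1),(0,0),(1,-1)\}$.

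The main obstacle is to prove that $E$ has rank $0$. I would do this by descent via the $2$-isogeny with kernel $\langle(0,0)\rangle$, whose codomain is $\bar E\colon w^2=u(u^2+2u-3)=u(u-1)(u+3)$. For $E$ itself the descent image lies in the square classes dividing the constant term $1$, namely $\{\pm1\}$; since $w^2=u(u^2-u+1)$ forces $u\ge0$, the class $-1$ is impossible and the image is trivial. For $\bar E$ the image lies in $\langle-1,3\rangle$, and one checks that the homogeneous spaces attached to $-1$ and to $-3$ both carry rational points (the class $-3$ is the image of the $2$-torsion point $(0,0)$, and $-1$ admits a small explicit solution), so that image has order $4$. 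The product formula $2^{\mathrm{rank}}=\tfrac14\,|\mathrm{im}\,\alpha|\cdot|\mathrm{im}\,\bar\alpha|=\tfrac14(1\cdot4)=1$ then gives $\mathrm{rank}\,E(\mathbb{Q})=0$; equivalently, this is a one-line SageMath computation. Consequently $E(\mathbb{Q})=\{O,(0,0),(1,\pm1)\}$, whose affine $u$-coordinates are only $0$ and $1$. The sole nonzero square among them is $u=1$, forcing $v^2=1$ and $L=1$, which is excluded. Therefore $L$ is not a rational square, and by the symmetry reduction neither is $1-L$, completing the proof.
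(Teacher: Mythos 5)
Your proof is correct, and at its core it lands on the same key object as the paper: the elliptic curve $E_1\colon y^2=x(x^2-x+1)$ and the fact that $E_1(\mathbb{Q})=\{\infty,(0,0),(1,\pm1)\}$. The differences are worth noting. First, where the paper introduces a second curve $E_2\colon y^2=(1-x)(x^2-x+1)$ to handle the claim about $1-L$, you instead observe that the substitution $L\mapsto 1-L$ preserves the hypotheses (since $(1-L)^2-(1-L)+1=L^2-L+1$), so the single statement about $L$ suffices; this is cleaner, and in fact explains why the paper's $E_2$ is just $E_1$ with $x$ replaced by $1-x$. Second, the paper simply asserts the list of rational points of $E_1$ and $E_2$ (implicitly via SageMath), whereas you supply a complete, self-contained verification: the $2$-torsion and reduction mod $5$ bound the torsion by a cyclic $2$-group, the tangent-line computation shows $(1,1)$ has order $4$, the duplication equation $x^4-4x^3+2x^2-4x+1=0$ rules out order $8$, and the $2$-isogeny descent to $\bar E\colon w^2=u(u-1)(u+3)$ gives $|\mathrm{im}\,\alpha|=1$ and $|\mathrm{im}\,\bar\alpha|=4$, hence rank $0$. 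All of these computations check out (e.g.\ $\#E(\mathbb{F}_5)=8$, the point $(-1,2)\in\bar E(\mathbb{Q})$ realizing the class $-1$, and $\bar\alpha((0,0))=-3$). What your route buys is a proof independent of computer algebra; what the paper's route buys is brevity. The only stylistic quibble is that the intermediate quartic curve $C\colon W^2=v^4-v^2+1$ is not really needed: if $L$ and $L^2-L+1$ are both squares then so is their product, which puts $\bigl(L,\sqrt{L(L^2-L+1)}\bigr)$ directly on $E_1$ — but your two-to-one map $u=v^2$, $w=vW$ produces exactly that point, so this is a matter of presentation, not substance.
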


\begin{proof}
Let $E_1$ and $E_2$ be elliptic curves over $\mathbb{Q}$ defined by
\[E_1: y^2 = x(x^2-x+1)\]
\[E_2: y^2 = (1-x)(x^2-x+1)\]
The rational points of $E_1$ and $E_2$ are 
\[E_1(\mathbb{Q}) = \{(0,0), (1,-1), (1,1), \infty\} \]
\[E_2(\mathbb{Q}) = \{(1,0), (0,-1), (0,1), \infty\}\]
None of the rational points yield a satisfying $L$.
\end{proof}

\begin{lem} \label{4t2}
Let $L \in \mathbb{Q} - \{0,1\}$. If $L^2 - L + 1 \in \mathbb{Q}^2$, then $L^2-L$ is not a square in $\mathbb{Q}$.
\end{lem}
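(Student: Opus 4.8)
The plan is to argue by contradiction and reduce the statement to Lemma \ref{4t1}, so that no fresh point count is needed. Suppose that $L \in \mathbb{Q} - \{0,1\}$ satisfies both $L^2 - L + 1 = W^2$ and $L^2 - L = V^2$ for some $W, V \in \mathbb{Q}$. If $V = 0$ then $L^2 - L = 0$, forcing $L \in \{0,1\}$, which is excluded; so $V \neq 0$. Subtracting the two relations gives $W^2 - V^2 = 1$, i.e. $(W-V)(W+V) = 1$, so the quantity $p := W - V$ is a nonzero rational with $W + V = 1/p$.

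First I would solve for $V$ in terms of $p$, namely $V = \tfrac{1}{2}(1/p - p) = (1-p^2)/(2p)$, and then use the elementary identity $1 + 4V^2 = (2L-1)^2$, which holds because $V^2 = L^2 - L$. Substituting the expression for $V$ gives
\[
(2L-1)^2 \;=\; 1 + 4V^2 \;=\; \frac{p^4 - p^2 + 1}{p^2},
\]
so that $p^4 - p^2 + 1 = \bigl(p(2L-1)\bigr)^2$ is a square in $\mathbb{Q}$. Setting $M := p^2$, this reads $M^2 - M + 1 \in \mathbb{Q}^2$ with $M$ itself a (nonzero) square --- precisely the configuration forbidden by Lemma \ref{4t1}.

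To finish, I would invoke Lemma \ref{4t1} with $M$ playing the role of $L$: since $M^2 - M + 1$ is a square, $M$ cannot be a square unless $M \in \{0,1\}$. But $M = p^2$ is a square and $M \neq 0$, so the only escape is $M = 1$, i.e. $p = \pm 1$; this in turn gives $V = (1-p^2)/(2p) = 0$, contradicting $V \neq 0$. Hence no such $L$ exists and $L^2 - L \notin \mathbb{Q}^2$.

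I expect the only real obstacle to be spotting the reduction: the identity $1 + 4V^2 = (2L-1)^2$ turns the pair of square conditions into the single requirement that $p^4 - p^2 + 1 = (p^2)^2 - p^2 + 1$ be a square with $p^2$ a square, which is exactly Lemma \ref{4t1} applied to $M = p^2$. Should one instead prefer an argument parallel to the proof of Lemma \ref{4t1}, an alternative is to note that if both $L^2-L$ and $L^2-L+1$ are squares then so is their product, so $(L, y)$ with $y^2 = (L^2-L)(L^2-L+1)$ lies on the genus-one curve $C: y^2 = x(x-1)(x^2-x+1)$; one would then compute $C(\mathbb{Q})$ in SageMath and check that every rational point has $x \in \{0,1\}$ or lies at infinity, none of which yields an admissible $L$. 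In that route the computation of the Mordell--Weil group (rank zero plus torsion) is the substantive step.
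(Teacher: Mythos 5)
Your argument is correct, and it takes a genuinely different route from the paper. The paper proves this lemma by a fresh point count: it puts the hypothetical $L$ on the genus-one curve $C: y^2 = (x^2-x)(x^2-x+1)$, computes (in Sage) that the Jacobian of $C$ has exactly four rational points, and concludes via the Abel--Jacobi embedding that the four known points of $C(\mathbb{Q})$ are all of them --- this is essentially the ``alternative'' you sketch at the end. Your main argument instead reduces the statement to Lemma \ref{4t1} by pure algebra: from $W^2 - V^2 = 1$ you parametrize $V = (1-p^2)/(2p)$, and the identity $(2L-1)^2 = 1 + 4(L^2-L)$ converts the two square conditions into the single condition that $M^2 - M + 1$ is a square for $M = p^2$, a nonzero square; since $M \neq 1$ (else $V=0$), Lemma \ref{4t1} forbids $M$ from being a square, a contradiction. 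The hypotheses of Lemma \ref{4t1} are all verified ($M \neq 0,1$ and $M^2-M+1 \in \mathbb{Q}^2$), there is no circularity since the paper proves Lemma \ref{4t1} independently, and every algebraic step checks out. What your reduction buys is the elimination of a second Mordell--Weil computation and of the slightly delicate appeal to $\#C(\mathbb{Q}) \leq \#J_C(\mathbb{Q})$ for a singular affine model; the cost is that the argument is less uniform with the proof of Lemma \ref{4t1} and relies on spotting the substitution $M = p^2$, but on balance it is the more elementary and self-contained proof.
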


\begin{proof}
Let $C$ be the curve defined by
\[C: y^2 = (x^2-x)(x^2-x+1).\]
A quick search yields the four points
\[C(\mathbb{Q})_{\mathrm{known}} = \{(0,0),(1,0), \infty, -\infty\},\]
none of which satisfies the conditions for $L$. It turns out that these are the only points of $C(\mathbb{Q})$. Indeed, as $C$ is of genus $1$ and has a point over $\mathbb{Q}$, we see that $C$ is an elliptic curve over $\mathbb{Q}$. It follows that the Jacobian $J_C$ of $C$ is an elliptic curve as well. Via Sage, we find that $J_C$ is isomorphic over $\mathbb{Q}$ to the curve 
\[J: y^2 = x^3 + \dfrac{2}{3}x + \dfrac{7}{27}.\]
The points $C(\mathbb{Q})$ are embedded into $J_C(\mathbb{Q})$ via the Abel-Jacobi map (see \ref{embedding} below), so
\[\#C(\mathbb{Q}) \leq \#J_C(\mathbb{Q}),\]
in the case both are finite.
However, as $J(\mathbb{Q})$ has exactly $4$ points, we find that $C(\mathbb{Q})$ is exactly the set of known points.
\end{proof}

\begin{prop} \label{4T} If $K/\mathbb{Q}$ is a quadratic extension, the torsion subgroup group $T_L(K)$ of $E_L(K)$ has a point of order $4$ exactly when $K=\mathbb{Q}(\sqrt{1-L})$.
\end{prop}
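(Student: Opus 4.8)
The plan is to recast the existence of a point of order $4$ as a divisibility question on the $2$-torsion. Since $f_L$ splits completely, the full $2$-torsion $E_L[2] = \{O, (0,0), (1,0), (L,0)\}$ is already $\mathbb{Q}$-rational, so for a point $Q \in E_L(K)$ of order $4$ we have $2Q \in E_L[2]\setminus\{O\}$, and conversely $E_L(K)$ contains a point of order $4$ doubling to a given nonzero $T \in E_L[2]$ precisely when $T \in 2E_L(K)$. Thus it suffices to decide, for each of the three nonzero $2$-torsion points, over which quadratic $K$ it becomes divisible by $2$.

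First I would record the halving criterion. Because $E_L[2]$ is $\mathbb{Q}$-rational, the complete $2$-descent map $\delta \colon E_L(K)/2E_L(K) \hookrightarrow (K^\times/(K^\times)^2)^2$ has trivial Galois action on $E_L[2]$ and $2E_L(K) = \ker\delta$; equivalently, one may solve the halving equation $2(x_0,y_0)=(e,0)$ directly and ask when its discriminant is a square in $K$. Either way, the point $(e_i,0)$ lies in $2E_L(K)$ exactly when the two root differences $e_i-e_j$ ($j\neq i$) are both squares in $K$. For the roots $0,1,L$ this yields three conditions: $(1,0)\in 2E_L(K)$ iff $1-L\in(K^\times)^2$ (the difference $1-0=1$ is automatically a square); $(0,0)\in 2E_L(K)$ iff both $-1$ and $-L$ lie in $(K^\times)^2$; and $(L,0)\in 2E_L(K)$ iff both $L$ and $L-1$ lie in $(K^\times)^2$.

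Since $1-L$ is not a square in $\mathbb{Q}$ by Lemma \ref{4t1}, the first condition holds over a quadratic $K$ exactly when $K=\mathbb{Q}(\sqrt{1-L})$, producing the asserted point of order $4$. It remains to exclude the other two conditions over every quadratic $K$. For $(0,0)$, the requirement $-1\in(K^\times)^2$ forces $K=\mathbb{Q}(i)$, and then $-L\in(\mathbb{Q}(i)^\times)^2$ iff $-L$ or $L$ is a square in $\mathbb{Q}$. For $(L,0)$, the requirement $L\in(K^\times)^2$ forces $K=\mathbb{Q}(\sqrt{L})$ (as $L$ is not a square in $\mathbb{Q}$ by Lemma \ref{4t1}), and then $L-1\in(K^\times)^2$ iff $L-1$ or $L(L-1)=L^2-L$ is a square in $\mathbb{Q}$. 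Lemma \ref{4t1} and Lemma \ref{4t2} dispose of the cases that $L$ or $L^2-L$ is a square, so everything reduces to ruling out the remaining possibilities that $-L$ or $L-1$ is a square in $\mathbb{Q}$.

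The main obstacle is precisely this last step, and I expect it to be handled exactly as in Lemmas \ref{4t1} and \ref{4t2}. If $-L=n^2$ or $L-1=b^2$, then combining with $L^2-L+1=W^2$ gives in both cases $t^4+t^2+1=W^2$ (with $t=n$, respectively $t=b$), and multiplying through by $t^2$ places the rational point $(t^2,\,tW)$ on the auxiliary curve $y^2=x(x^2+x+1)$, the quadratic twist by $-1$ of the curve $E_1$ from Lemma \ref{4t1}. Showing this curve has rank $0$ with rational points only $O$ and $(0,0)$ then forces $t^2=0$, i.e. $L\in\{0,1\}$, which is excluded. With $-L$ and $L-1$ thereby eliminated, the $(0,0)$ and $(L,0)$ cases cannot occur over any quadratic field, leaving $K=\mathbb{Q}(\sqrt{1-L})$ as the unique quadratic extension admitting a point of order $4$.
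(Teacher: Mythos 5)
Your proof is correct and reaches the statement by a genuinely different route from the paper. The paper works directly with the duplication formula: it solves $x(2P)\in\{0,1,L\}$ to get the abscissas $x=\pm\sqrt{L}$, $1\pm\sqrt{1-L}$, $L\pm\sqrt{L^2-L}$ of the order-$4$ points, evaluates $f_L$ at each to find $y$, and reads off which quadratic fields can contain such a point. You instead invoke the classical halving criterion — $(e_i,0)\in 2E_L(K)$ iff both root differences $e_i-e_j$ are squares in $K$ — which packages the same information more cleanly and reduces everything to square-class conditions on $1-L$, $-1$, $-L$, $L$, $L-1$, $L^2-L$. The two routes ultimately need the same inputs, and your bookkeeping exposes a point the paper's proof glosses over: besides $L$, $1-L$ (Lemma \ref{4t1}) and $L^2-L$ (Lemma \ref{4t2}) being non-squares, one must also rule out $-L$ and $L-1$ being rational squares, since otherwise the order-$4$ points above $\pm\sqrt{L}$ (resp.\ $L\pm\sqrt{L^2-L}$) would collapse into the quadratic field $\mathbb{Q}(i)$ (resp.\ $\mathbb{Q}(\sqrt{L^2-L})$); the paper cites only Lemmas \ref{4t1} and \ref{4t2} here, which do not cover these cases. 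Your reduction of both to $t^4+t^2+1\in\mathbb{Q}^2$ and hence to the curve $y^2=x(x^2+x+1)$ is right, and the auxiliary fact you defer is true: that curve has conductor $48$, rank $0$, and torsion $\mathbb{Z}/2\mathbb{Z}$ (the cubic $x^2+x+1$ is irreducible, and neither $f(1)=3$ nor $f(-1)=-1$ is a square, so there is no $4$-torsion), so its only rational points are $O$ and $(0,0)$, forcing $t=0$ and hence $L\in\{0,1\}$, a contradiction. You should state and verify that as a lemma in the style of Lemmas \ref{4t1} and \ref{4t2} rather than leave it as an expectation, but with that done your argument is complete — and somewhat more careful than the paper's own.
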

\begin{proof}
Let $(x,y)$ be a point of order order $4$ in $T_L(K)$. By the duplication formula (\cite{silverman1}, pp. 54), if $P=(x,y) \in T(K)$ is a point of order $4$, then the abscissa $x'$ of $2P$ must satisfy
\begin{equation} \label{dup}
x'= \dfrac{(x^2-L)^2}{4y^2} = \dfrac{(x^2-L)^2}{4x(x-1)(x-L)} = 0, 1, \text{ or } L.
\end{equation}
Solve for $x$ from equation (\ref{dup}) for each possible $x'$, we obtain
\begin{equation}
x = \pm \sqrt{L}, \, 1 \pm \sqrt{1-L}, \, L \pm \sqrt{L^2-L},
\end{equation}
none of which are rational by the Lemma \ref{4t1} and Lemma \ref{4t2}, which implies that none of the points of order $4$ are in $T_L(\mathbb{Q})$.

For the set of $x$-values above, the corresponding values of $y^2$ are
\begin{equation}
f(\pm \sqrt{L}) = -L(\sqrt{L} \mp 1)^2,
\end{equation}
\begin{equation}
f(1 \pm \sqrt{1-L}) =  (1-L)(1 \mp \sqrt{1-L})^2,
\end{equation}
\begin{equation}
f(L \pm \sqrt{L^2-L}) = (L-1)(L\pm \sqrt{L^2-L})^2.
\end{equation}

It follows that the points of order $4$ with the $x$-values $\pm \sqrt{L}$ or $L \pm \sqrt{L^2-L}$ all live in some bi-biquadratic extensions by Lemma \ref{4t1} and Lemma \ref{4t2}. The points of order $4$ that are in some quadratic extensions are exactly the ones with $x$-values $x = 1 \pm \sqrt{1-L}$, and all such points are in the quadratic extension $K = \mathbb{Q}(\sqrt{1-L})$.

\end{proof}

Now we will prove that $T_L(\mathbb{Q})$ does not have an element of order $3$. The proof requires Proposition \ref{hyper}, which we will prove later in Section \ref{proof} using Chabauty and Coleman's Method.

\begin{prop} \label{3T} The group $T_L(\mathbb{Q})$ does not have a point of order $3$.
\end{prop}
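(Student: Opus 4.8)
The plan is to prove the stronger statement that the $3$-division polynomial of $E_L$ has no rational root; since the abscissa of any rational point of order $3$ is such a root, this is enough. Writing $f_L(x) = x^3 - (L+1)x^2 + Lx$, the $3$-division polynomial is
\[
\psi_3(x) = 3x^4 - 4(L+1)x^3 + 6Lx^2 - L^2 .
\]
I therefore want to show that the system $\psi_3(x) = 0$, $\;L^2 - L + 1 = W^2$ has no solution with $x, L, W \in \mathbb{Q}$ and $L \neq 0,1$.

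The first step is to regard $\psi_3(x) = 0$ as a quadratic equation in $L$,
\[
L^2 + (4x^3 - 6x^2)L - (3x^4 - 4x^3) = 0 ,
\]
whose discriminant is $(4x^3 - 6x^2)^2 + 4(3x^4 - 4x^3) = 16x^3(x-1)^3 = \bigl(4x(x-1)\bigr)^2\, x(x-1)$. For a rational $x$ to yield a rational $L$ this must be a square; moreover $x = 0$ and $x = 1$ force $L = 0$ and $L = 1$, which are excluded, so we may assume $x(x-1) \neq 0$ and conclude $x(x-1) \in (\mathbb{Q}^\times)^2$. Writing $x(x-1) = v^2$ and rationally parametrizing this conic by $x = (u+1)^2/4u$, $\;v = (1-u^2)/4u$, the quadratic formula expresses $L$ as an explicit rational function of $u$; one root is $L = (u+1)^3(3-u)/16u$, and the other leads, by the same computation, to a genus-$3$ curve isomorphic over $\mathbb{Q}$ to the one below, so it suffices to treat this branch.

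Substituting this $L$ into the nice-curve condition $L^2 - L + 1 = W^2$ and clearing denominators with $Y = 16uW$ collapses everything to a single hyperelliptic equation, which simplifies to
\[
H : \quad Y^2 = u^8 - 12u^6 + 30u^4 + 228u^2 + 9 = (u^2+3)\bigl(u^6 - 15u^4 + 75u^2 + 3\bigr).
\]
The right-hand side is squarefree of degree $8$, so $H$ has genus $3$ (and the factor $u^2+3$ already hints at the role of $\mathbb{Q}(\sqrt{-3})$ elsewhere in the theorem). At this point I would invoke Proposition~\ref{hyper}, which via the Chabauty--Coleman method of Section~\ref{chab} asserts that $H$ has exactly $12$ rational points. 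A direct check shows these are the two points at infinity together with the points above $u \in \{0, \pm 1, \pm 3\}$, and every one of them returns $L \in \{0,1\}$ or a pole $L = \infty$. None satisfies $L \in \mathbb{Q} \setminus \{0,1\}$, so $\psi_3$ has no rational root for a nice $L$, and $T_L(\mathbb{Q})$ contains no point of order $3$.

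The decisive and only difficult step is the exact count $\#H(\mathbb{Q}) = 12$, deferred to Proposition~\ref{hyper}. Because $H$ has genus $3 > 1$, Faltings gives finiteness but no effective bound; Chabauty--Coleman applies only after one checks that the Mordell--Weil rank of the Jacobian of $H$ is at most $g - 1 = 2$, after which $\#H(\mathbb{Q})$ is controlled by counting zeros of an appropriate Coleman integral on a good reduction, pinning the total down to the $12$ evident points. Everything preceding it --- the division-polynomial identity, the discriminant-is-a-square reduction, and the parametrization yielding $H$ --- is elementary algebra.
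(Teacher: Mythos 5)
Your proof is correct, and your reduction is genuinely different from the paper's. The paper shifts the quartic by $(L+1)/3$, substitutes $A=(2L-1)/x$ and $B=(L^2-L+1)/x^2$, lands on the cuspidal cubic $B^2-12AB+18B=27-4A^3$, parametrizes its smooth locus, and only then imposes the side conditions $B\in\mathbb{Q}^2$ and $A^2-4B=-3/x^2$ to reach the sextic $3u^6+75u^4-15u^2+1=(3u^2+1)v^2$. You instead read $\psi_3=0$ as a quadratic in $L$, note that its discriminant $16x^3(x-1)^3=\bigl(4x(x-1)\bigr)^2x(x-1)$ forces $x(x-1)\in(\mathbb{Q}^\times)^2$, parametrize that conic, and substitute the closed form $L=(u+1)^3(3-u)/(16u)$ directly into $L^2-L+1=W^2$; I verified the algebra, including the factorization $u^8-12u^6+30u^4+228u^2+9=(u^2+3)(u^6-15u^4+75u^2+3)$, and your disposal of the second root is legitimate because $u\mapsto 1/u$ fixes $x$ and negates $v$, hence literally interchanges the two branches. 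This route is shorter and makes the role of the hypothesis $L^2-L+1\in\mathbb{Q}^2$ more transparent. The one loose end is that your final curve is not \emph{literally} the curve of Proposition \ref{hyper}, so you cannot invoke that proposition without exhibiting an isomorphism; fortunately one exists over $\mathbb{Q}$: setting $x=(1+u)/(1-u)$ and $y=Y/(1-u)^4$ one checks $x^2+x+1=(u^2+3)/(1-u)^2$ and $x^6+3x^5-5x^3+3x+1=(u^6-15u^4+75u^2+3)/(1-u)^6$, so your model maps to the paper's $H$, and the paper's twelve points (with $x\in\{0,1,-1,-2,-1/2,\infty\}$) correspond exactly to your $u\in\{0,\pm1,\pm3,\infty\}$. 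With that link written down the argument is complete. Both proofs rest on the same hard input, the $12$-point count of Proposition \ref{hyper} (which the paper obtains via an \'etale genus-$2$ quotient rather than by running Chabauty on the genus-$3$ curve itself, a detail your closing paragraph slightly misdescribes but does not depend on); neither route gains anything there, but your derivation of the obstruction curve is the cleaner of the two.
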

\begin{proof}
We will show that the $3$-divisional polynomial $\psi(x) = 3x^4 - 4(1+L)x^3 + 6Lx^2-L^2$ does not have a rational root. We will assume on the contrary that $\psi(x)$ has a rational root. Note that 
\[\psi\left(x+\frac{L+1}{3}\right) = 3x^4 - 2(L^2-L+1)x^2 -\frac{4}{9}(2L-1)(L+1)(L-2)x-\frac{1}{9}(L^2-L+1)^2.\]
Letting $B = (L^2-L+1)/x^2$ and $A = (2L-1)/x$, we obtain the cubic curve
\begin{equation} 
S: B^2 - 12AB + 18B = 27 - 4A^3.
\end{equation}
with a singularity at $(-3,9/2)$, where $A$ and $B$ also satisfy
\begin{equation}
B =(L^2-L+1)/x^2 \in \mathbb{Q}^2 \text{ and } \, A^2 - 4B = -3/x^2.
\end{equation}
The point $(-3,9/2)$ does not yield a rational $L$. Now, by \cite[Proposition 2.5]{silverman1}, as the singularity of $S$ is a cusp, the nonsingular rational points form a group and can be given by
\begin{equation}
\{(A,B) = (3 - t^2, 2t^3-6t^2+9): t \in \mathbb{Q}\}.
\end{equation}
The equation $A^2 - 4B = -3/x^2$ implies that $t$ must be of the form $t = (3-3u^2)/(1+3u^2)$ for some $u \in \mathbb{Q}$, and the fact that $B \in \mathbb{Q}^2$ implies that 
\begin{equation}
3u^6 + 75u^4 - 15u^2 + 1 = (3u^2+1)v^2
\end{equation}
for some $v \in \mathbb{Q}$.

Let $C$ be the projective curve of degree $6$ in $\mathbb{P}^2$ defined by
\begin{equation} \label{Ueq}
C: 3U^6 + 75U^4W^2 - 15U^2W^4 + W^6 - 3U^2V^2W^2-V^2W^4=0.
\end{equation}
The curve $C$ is birational to the hyperelliptic curve $H$ in the weighted projective plane $\mathbb{P}^2_3 = \mathbb{P}^2_{(1,4,1)}$ defined by
\begin{equation} \label{thecurve}
H: Y^2 = (X^2+XZ+Z^2)(X^6+3X^5Z-5X^3Z^2+3XZ^5+Z^6),
\end{equation}
where the birational transformation is given by coordinate mapping
\begin{equation}
X = U/2 + W/2, \, \quad Y = -3U^2VW/16 - VW^3/16, \quad \, Z = U/2 - W/2.
\end{equation}
As $U = X+Z$ and $W=X-Z$, we can easily obtain the $U$-coordinate of the finite points $(W=1)$ of equation \ref{Ueq} from Proposition \ref{hyper}:
\[U = \{1,1/3,0,-1,-1/3 \}.\]
Note that $U=0$ leads to $-3/X^2 = A^2 - 4B = 0$, which is not possible. All the other points lead to $L^2-L + 1 = -3B/(A^2-4B) = 1$, which is not possible since $L \neq 0,1$. Hence, the $3$-divisional polynomial $\psi(x)$ does not have a rational root.
\end{proof}

\begin{prop} If $E_L$ is a nice elliptic curve, then $E_L(\mathbb{Q})$ is isomorphic to $\mathbb{Z}/2\mathbb{Z} \oplus \mathbb{Z}/2\mathbb{Z}$.
\end{prop}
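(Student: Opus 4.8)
The plan is to split the statement into its torsion and its rank parts, the first of which is already essentially in hand. First I would pin down the torsion: Proposition \ref{4T} applied with $K = \mathbb{Q}$ shows $E_L(\mathbb{Q})$ has no point of order $4$, since $\mathbb{Q} \neq \mathbb{Q}(\sqrt{1-L})$ -- by Lemma \ref{4t1} the element $1-L$ is not a square, so $\mathbb{Q}(\sqrt{1-L})$ is a genuine quadratic field -- while Proposition \ref{3T} rules out a point of order $3$. Since $\{O,(0,0),(1,0),(L,0)\} \cong \mathbb{Z}/2\mathbb{Z} \oplus \mathbb{Z}/2\mathbb{Z}$ sits inside $T_L(\mathbb{Q})$, Mazur's theorem forces $T_L(\mathbb{Q}) \cong \mathbb{Z}/2\mathbb{Z} \oplus \mathbb{Z}/2N\mathbb{Z}$ with $N \in \{1,2,3,4\}$; the absence of order-$3$ and order-$4$ points eliminates $N = 2,3,4$, leaving $T_L(\mathbb{Q}) \cong \mathbb{Z}/2\mathbb{Z} \oplus \mathbb{Z}/2\mathbb{Z}$. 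To upgrade this to the full assertion $E_L(\mathbb{Q}) \cong \mathbb{Z}/2\mathbb{Z} \oplus \mathbb{Z}/2\mathbb{Z}$, it then remains exactly to prove that the Mordell--Weil rank of $E_L(\mathbb{Q})$ is zero.

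For the rank I would run a complete $2$-descent, which is available because $E_L$ has full rational $2$-torsion with $x$-coordinates $0,1,L$. Taking the descent homomorphism $E_L(\mathbb{Q})/2E_L(\mathbb{Q}) \hookrightarrow \left(\mathbb{Q}^{\times}/(\mathbb{Q}^{\times})^2\right)^2$ given by $(x,y) \mapsto (x,\,x-1)$, with the third slot $x-L$ determined by $x(x-1)(x-L)=y^2$, the images of the three $2$-torsion points are read off from the usual rule and, together with the identity, form a subgroup of order $4$; this subgroup is genuinely of order $4$ because $L$ and $1-L$ are non-squares by Lemma \ref{4t1} (and, in the three-coordinate formulation, because $L^2-L$ is a non-square by Lemma \ref{4t2}). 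The rank is zero if and only if the $2$-Selmer group coincides with this order-$4$ subgroup. The support of any Selmer class is confined to the primes dividing $2L(L-1)$, and under the parametrization $L = (s^2-1)/(2s-1)$ these are governed by the primes dividing $2s(s-1)(s+1)(s-2)(2s-1)$; the aim is to show that every class outside the $2$-torsion image fails to be everywhere locally solvable.

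The hard part will be making this descent uniform in the parameter $s$. Each candidate Selmer class corresponds to a genus-$1$ homogeneous space -- an intersection of two conics, or after an isogeny descent a quartic of the shape $d w^2 = d^2 u^4 - (1+L)\,d\,u^2 + L$ -- whose everywhere-local-solvability must be decided for all admissible $s$ simultaneously. I expect the hypothesis $L^2 - L + 1 = W^2 \in (\mathbb{Q}^{\times})^2$ to be the decisive input, just as it was in Lemmas \ref{4t1} and \ref{4t2}, where the square condition forced the auxiliary genus-$1$ curves $E_1$, $E_2$, and $C$ to carry no admissible rational point; the natural route is to reduce the vanishing of each ``extra'' Selmer class to the non-existence of rational or local points on an explicit curve of genus $0$ or $1$ assembled from $L$ and $W$, and then to dispatch these curves as in those lemmas. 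This uniform rank-zero statement is the genuine obstacle: it is exactly what distinguishes the full Mordell--Weil group from the torsion subgroup, and it is where the $2$-descent could prove inconclusive -- should a non-$2$-torsion Selmer class survive for some $s$, a finer descent would be needed to decide whether it reflects a rational point of infinite order or merely an element of the Tate--Shafarevich group.
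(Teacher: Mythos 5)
Your first paragraph is exactly the paper's proof: Proposition \ref{4T} (whose proof shows the $x$-coordinates of order-$4$ points are irrational, using Lemmas \ref{4t1} and \ref{4t2}) rules out $4$-torsion over $\mathbb{Q}$, Proposition \ref{3T} rules out $3$-torsion, and Mazur's theorem applied to the Klein four subgroup $\{O,(0,0),(1,0),(L,0)\}$ then forces $T_L(\mathbb{Q}) \cong \mathbb{Z}/2\mathbb{Z} \oplus \mathbb{Z}/2\mathbb{Z}$. That is the entire content of the paper's argument: despite its wording, the proposition is a statement about the torsion subgroup, as the paragraph preceding it makes explicit (``We will show that $T_L(\mathbb{Q})$ is isomorphic to $\mathbb{Z}/2\mathbb{Z} \oplus \mathbb{Z}/2\mathbb{Z}$\dots''), and the paper stops where your first paragraph stops.

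The genuine gap is in your second half, and it is not merely that the uniform $2$-descent is hard --- the rank-zero statement you set out to prove is \emph{false}. The paper itself, in Theorem \ref{main}(iii) and the proposition following Corollary \ref{3T2}, exhibits nice elliptic curves of positive rank over $\mathbb{Q}$: for $L = (u^2+3)(u^2-1)/(4u^2)$ with $u \in \mathbb{Q} - \{0,\pm 1\}$, the point $(s,\sqrt{f_L(s)})$ at the critical point $s$ of $f_L$ is a \emph{rational} point of infinite order. So for infinitely many nice $L$ one has $E_L(\mathbb{Q}) \cong \mathbb{Z}/2\mathbb{Z} \oplus \mathbb{Z}/2\mathbb{Z} \oplus \mathbb{Z}^r$ with $r \geq 1$, and in your descent the Selmer class of such a point survives for the honest reason that it comes from a rational point; no argument showing every non-torsion class is everywhere locally insolvable can exist uniformly in $s$. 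You sensed the danger in calling this step ``the genuine obstacle'' and allowing that it ``could prove inconclusive,'' but the correct resolution is different: the literal statement of the proposition is an overstatement in the paper's own terms, the provable (and intended) claim concerns only $T_L(\mathbb{Q})$, and your first paragraph already proves it; the rank program should simply be deleted rather than completed.
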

\begin{proof}
By Proposition \ref{4T}, $E_L(\mathbb{Q})$ does not have any point of order $4$, and by Proposition \ref{3T}, $E_L(\mathbb{Q})$ does not have a point of order $3$. Hence, the  proof of the proposition now follows.
\end{proof}

Below we will see that $E_L$ does not acquire more torsion points when extended to almost any quadratic extension over $\mathbb{Q}$.

Let $[D] \in \mathbb{Q}^{\times}/(\mathbb{Q}^{\times})2$ with $[D] \neq [1]$. For a nice elliptic curve $E_L: y^2 = f_L(x)$, we denote its quadratic twist 
\begin{equation}
E_L^D: y^2 = f_L^D(x)= x(x-D)(x-DL).
\end{equation}
Note also that the derivative of $f_L^D$ also split completely over $\mathbb{Q}$ as $L^2 - L + 1 \in \mathbb{Q}^2$.

\begin{rmk} If $E_L$ is a nice elliptic curve 
then we can twist $E_L$ to obtain curves of the form
\begin{equation}
E_{(a,b)}: y^2 = x(x-a)(x-b),
\end{equation}
where $a,b$ are distinct non-zero integers such that $a^2-ab+b^2$ is an integer square. In fact, using the technique from Proposition \ref{L}, we find that $a = u^2 - v^2$ and $b = 2uv - v^2$.
\end{rmk}

\begin{prop} \label{torsionoftwist} Let $E_L^D$ be a quadratic twist of a nice elliptic curve $E_L$. Then the torsion subgroup $T_L^D(\mathbb{Q})$ of $E_L^D(\mathbb{Q})$ is isomorphic to $\mathbb{Z}/2\mathbb{Z} \oplus \mathbb{Z}/2\mathbb{Z}$.
\end{prop}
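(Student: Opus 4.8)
The plan is to reduce the statement entirely to what has already been proved for $E_L$, using that $E_L^D$ and $E_L$ become isomorphic over $\mathbb{Q}(\sqrt{D})$. First I would note that $E_L^D$ carries the three rational $2$-torsion points $(0,0)$, $(D,0)$, $(DL,0)$, so $\mathbb{Z}/2\mathbb{Z}\oplus\mathbb{Z}/2\mathbb{Z}\subseteq T_L^D(\mathbb{Q})$; by Mazur's Theorem $T_L^D(\mathbb{Q})\cong\mathbb{Z}/2\mathbb{Z}\oplus\mathbb{Z}/2N\mathbb{Z}$ with $N\in\{1,2,3,4\}$. It therefore suffices to rule out a rational point of order $3$ and a rational point of order $4$, since excluding both forces $N=1$.

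The engine of the argument is the isomorphism $\phi\colon E_L\to E_L^D$ given by $(u,v)\mapsto(Du,\,D\sqrt{D}\,v)$, which is defined over $\mathbb{Q}(\sqrt{D})$: substituting $u=x/D$ into $v^2=u(u-1)(u-L)$ and clearing denominators yields $y^2=x(x-D)(x-DL)$. As a group isomorphism of geometric points it restricts, for every $n$, to a bijection $E_L[n]\to E_L^D[n]$ that multiplies $x$-coordinates by the \emph{rational} number $D$. Hence a torsion point of $E_L^D$ has rational $x$-coordinate if and only if the corresponding torsion point of $E_L$ does.

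For order $4$, the proof of Proposition \ref{4T} shows that every order-$4$ point of $E_L$ has $x$-coordinate in $\{\pm\sqrt{L},\,1\pm\sqrt{1-L},\,L\pm\sqrt{L^2-L}\}$, each irrational by Lemmas \ref{4t1} and \ref{4t2}. Multiplying by the nonzero rational $D$ preserves irrationality, so $E_L^D$ has no rational point of order $4$. For order $3$, the $3$-division polynomial of $E_L^D$ equals $D^4\,\psi(x/D)$, where $\psi$ is the $3$-division polynomial of $E_L$; a rational order-$3$ point of $E_L^D$ would then produce a rational root $x/D$ of $\psi$, contradicting Proposition \ref{3T}. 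These two exclusions together give $T_L^D(\mathbb{Q})\cong\mathbb{Z}/2\mathbb{Z}\oplus\mathbb{Z}/2\mathbb{Z}$.

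The only substantial input is Proposition \ref{3T}, whose proof runs through the Chabauty--Coleman analysis of the genus-$3$ curve $H$; the twist itself contributes no new difficulty, because scaling an $x$-coordinate by a fixed nonzero rational preserves both rationality and irrationality. The step deserving care is the bookkeeping that $\phi$ is genuinely defined over $\mathbb{Q}(\sqrt{D})$ and induces the claimed bijection on $n$-torsion with the stated scaling of $x$-coordinates; once that is checked, both exclusions are immediate.
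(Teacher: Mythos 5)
Your proof is correct and follows essentially the same route as the paper: both use the $\mathbb{Q}(\sqrt{D})$-isomorphism $(x,y)\mapsto(Dx,D\sqrt{D}y)$ to transport the absence of rational roots of the $3$-division polynomial (Proposition \ref{3T}) and the irrationality of the order-$4$ $x$-coordinates (Proposition \ref{4T} via Lemmas \ref{4t1} and \ref{4t2}) from $E_L$ to $E_L^D$. If anything, your explicit remark that scaling an irrational $x$-coordinate by the nonzero rational $D$ preserves irrationality states the order-$4$ exclusion more directly than the paper's phrasing does.
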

\begin{proof}
We will show that $E_L^D(\mathbb{Q})$ does not have a point of order $4$ or a point of order $3$. The curves $E_L$ and its quadratic twist $E_L^D$ are isomorphic over $\mathbb{Q}(\sqrt{D})$ via the isomorphism given by
\begin{equation} \label{qiso}
E_L \rightarrow E_L^D, \hspace{0.2in} (x,y) \mapsto \left(Dx, D\sqrt{D}y\right).
\end{equation}
The isomorphism implies that since the the $3$-divisional polynomial $E_L$ does not have a $\mathbb{Q}$-rational root, then neither does the $3$-divisional polynomial of $E_L^D$. It follows that $E_L^D(\mathbb{Q})$ does not have a point of order $3$.

Lemma \ref{4T} implies that $E_L(K)$ has a point $(x,y)$ of order $4$ exactly when $K = \mathbb{Q}(\sqrt{1-L})$, specifically both $x$ and $y$ are in $\mathbb{Q}(\sqrt{1-L})$. From the isomorphism above, we see that $E_L^D$ has a point of order $4$ exactly when $Q(\sqrt{D}) = \mathbb{Q}(\sqrt{1-L})$.

\end{proof}

\begin{cor} Let $E_L$ be a nice elliptic curve with a quadratic twist $E_L^D$. If $K/\mathbb{Q}$ is a quadratic extension, then $E_L^D(K)$ has a point of order $4$ exactly when $K = \mathbb{Q}(\sqrt{1-L}) = \mathbb{Q}(\sqrt{D})$.
\end{cor}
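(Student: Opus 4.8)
The plan is to read this off as a field-of-definition refinement of Proposition \ref{torsionoftwist}, by transporting the classification of order-$4$ points of $E_L$ from Proposition \ref{4T} across the $\mathbb{Q}(\sqrt{D})$-isomorphism
\[
\phi\colon E_L \longrightarrow E_L^D, \qquad (x,y)\longmapsto (Dx,\,D\sqrt{D}\,y).
\]
Since $\phi$ is a group isomorphism, it carries the order-$4$ points of $E_L$ bijectively onto those of $E_L^D$, multiplying each $x$-coordinate by $D$. Hence the order-$4$ points of $E_L^D$ have $x$-coordinates
\[
\pm D\sqrt{L},\qquad D(1\pm\sqrt{1-L}),\qquad D(L\pm\sqrt{L^2-L}),
\]
the three families being exactly the points that double to the $2$-torsion points $(0,0)$, $(D,0)$, and $(DL,0)$, respectively.

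I would first prove the forward direction. Suppose $P=(x_0,y_0)\in E_L^D(K)$ has order $4$. Then $x_0$ lies in one of the three families above, and since $x_0\in K$ while $L$, $1-L$, and $L^2-L$ are non-squares by Lemmas \ref{4t1} and \ref{4t2}, the field $\mathbb{Q}(x_0)$ is a genuine quadratic field that must equal $K$; concretely $K$ is $\mathbb{Q}(\sqrt{L})$, $\mathbb{Q}(\sqrt{1-L})$, or $\mathbb{Q}(\sqrt{L^2-L})$ according to the family. I then impose $y_0\in K$ via $y_0^2=f_L^D(x_0)=D^3 f_L(x_0/D)$, using the three displayed identities from Proposition \ref{4T}. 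For the middle family one gets $y_0^2=D^3(1-L)(1\mp\sqrt{1-L})^2$; as $(1\mp\sqrt{1-L})^2$ and $(1-L)=(\sqrt{1-L})^2$ are already squares in $K=\mathbb{Q}(\sqrt{1-L})$, the condition $y_0\in K$ reduces to $\sqrt{D}\in\mathbb{Q}(\sqrt{1-L})$. Because $[D]\neq[1]$, this forces $\mathbb{Q}(\sqrt{D})=\mathbb{Q}(\sqrt{1-L})$, which is precisely the asserted equality $K=\mathbb{Q}(\sqrt{1-L})=\mathbb{Q}(\sqrt{D})$.

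The main obstacle is the other two families, for which the twisting factor $D$ can in principle cancel the square-root obstruction that kept them in a bi-quadratic field on the untwisted curve. For $x_0=\pm D\sqrt{L}$ one finds $y_0^2=-D^3L\,(\sqrt{L}\mp1)^2$, so $y_0\in K=\mathbb{Q}(\sqrt{L})$ exactly when $-DL$ is a square in $\mathbb{Q}(\sqrt{L})$, i.e. when $[D]=[-1]$ or $[D]=[-L]$; and for $x_0=D(L\pm\sqrt{L^2-L})$ one finds $y_0^2=D^3(L-1)(L\pm\sqrt{L^2-L})^2$, so $y_0\in\mathbb{Q}(\sqrt{L^2-L})$ exactly when $D(L-1)$ is a square there, i.e. when $[D]=[L]$ or $[D]=[L-1]$. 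Thus the crux of the proof is to rule out these square-class coincidences for the relevant $D$, relating $[D]$ to $[L]$, $[1-L]$, and $[-1]$; this is the delicate point on which the clean statement rests, and it is where I would concentrate the effort, since here the interaction between the twist parameter and the invariants controlled by Lemmas \ref{4t1} and \ref{4t2} is genuinely at issue.

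Finally, for the converse I would simply exhibit the point. Assuming $K=\mathbb{Q}(\sqrt{1-L})=\mathbb{Q}(\sqrt{D})$, the value $x_0=D(1+\sqrt{1-L})$ lies in $K$, and because $\sqrt{D}\in K$ the ordinate $y_0=D\sqrt{D}\,\sqrt{1-L}\,(1-\sqrt{1-L})$ also lies in $K$. A direct check that $P=(x_0,y_0)$ doubles to the $2$-torsion point $(D,0)$ shows $P$ has order $4$, completing the equivalence. This last step is routine, using only the isomorphism $\phi$ applied to the order-$4$ point of $E_L$ over $\mathbb{Q}(\sqrt{1-L})$ produced in Proposition \ref{4T}.
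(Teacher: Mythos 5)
Your reduction to the three families of order-$4$ abscissae and your handling of the middle family reproduce exactly what the paper does implicitly: the paper gives no separate proof of this corollary, and the final sentence of the proof of Proposition \ref{torsionoftwist} likewise considers only the points over $x=D(1\pm\sqrt{1-L})$. But the step you defer as ``the crux'' --- excluding $[D]\in\{[-1],[-L]\}$ for the family $x=\pm D\sqrt{L}$ and $[D]\in\{[L],[L-1]\}$ for the family $x=D(L\pm\sqrt{L^2-L})$ --- is a genuine gap, and it cannot be closed. Lemmas \ref{4t1} and \ref{4t2} constrain only the square classes of $L$, $1-L$ and $L^2-L$; they say nothing about $D$, which ranges over every nontrivial square class. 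Take $[D]=[-1]$: on $E_L^{-1}\colon y^2=x(x+1)(x+L)$ the point $\bigl(-\sqrt{L},\,\sqrt{L}-L\bigr)$ satisfies the equation, is defined over $K=\mathbb{Q}(\sqrt{L})$ (a genuine quadratic field by Lemma \ref{4t1}), and doubles to $(0,0)$, hence has order $4$; yet for $L=8/5$ (from $t=3$) this gives $K=\mathbb{Q}(\sqrt{10})$, which is not $\mathbb{Q}(\sqrt{1-L})=\mathbb{Q}(\sqrt{-15})$, and $[D]=[-1]\neq[1-L]$. So the corollary as stated is false, and your own computation of the offending square classes is precisely the evidence: the correct statement must admit the additional cases $K=\mathbb{Q}(\sqrt{L})$ with $[D]\in\{[-1],[-L]\}$ and $K=\mathbb{Q}(\sqrt{L^2-L})$ with $[D]\in\{[L],[L-1]\}$ alongside $K=\mathbb{Q}(\sqrt{1-L})=\mathbb{Q}(\sqrt{D})$.

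Your converse direction is fine, and your analysis is in fact sharper than the paper's. For the untwisted curve in Proposition \ref{4T} the first and third families are pushed into biquadratic fields by the residual square roots $\sqrt{-L}$ and $\sqrt{L-1}$ in the ordinate; after twisting, the factor $D\sqrt{D}$ can absorb exactly that obstruction, and you identified the precise square-class conditions under which it does. The right move is not to ``concentrate the effort'' on excluding those conditions but to observe that they are realizable and that the statement (and the parallel claim at the end of the proof of Proposition \ref{torsionoftwist}) needs to be corrected.
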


 The lemma below is well-known (see \cite[Corollary 4]{gj} and \cite[Lemma 1.1]{ll}).
For a general elliptic curve $E: y^2 = f(x)$, we will denote its quadratic twist by
\begin{equation}
E^D: Dy^2 = f(x).
\end{equation}
\begin{lem} \label{twist}Let $E/\mathbb{Q}$ be an elliptic curve and $E^D$ its quadratic twist. We have the following isomorphism for odd integer $n$:
\[E(\mathbb{Q}(\sqrt{D}))[n] \simeq E(\mathbb{Q})[n] \oplus E^D(\mathbb{Q})[n].\]
\end{lem}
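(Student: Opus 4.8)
The plan is to exploit the action of $\mathrm{Gal}(\mathbb{Q}(\sqrt D)/\mathbb{Q}) = \{1,\sigma\}$ on the finite group $E(\mathbb{Q}(\sqrt D))[n]$ and to decompose it into $\sigma$-eigenspaces. Write $K = \mathbb{Q}(\sqrt D)$, so that $\sigma$ is the nontrivial automorphism fixing $\mathbb{Q}$ and sending $\sqrt D \mapsto -\sqrt D$. Since $n$ is odd, $2$ is invertible modulo $n$; choosing $m$ with $2m \equiv 1 \pmod n$ and writing, for each $P \in E(K)[n]$,
\[P = m\bigl(P + \sigma P\bigr) + m\bigl(P - \sigma P\bigr),\]
one checks that the first summand is fixed by $\sigma$ while the second is negated by $\sigma$. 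This produces an internal direct sum
\[E(K)[n] = E(K)[n]^{+} \oplus E(K)[n]^{-},\]
where $E(K)[n]^{\pm} = \{P : \sigma P = \pm P\}$. The oddness of $n$ is precisely what makes this splitting available, since otherwise the implicit factor of $\tfrac12$ would not exist.

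The plus eigenspace is immediate: $E(K)[n]^{+}$ consists of the $n$-torsion points fixed by $\sigma$, hence defined over $\mathbb{Q}$, so $E(K)[n]^{+} = E(\mathbb{Q})[n]$. For the minus eigenspace I would use the standard twisting isomorphism $\phi : E \to E^{D}$ defined over $K$, concretely $(x,y) \mapsto (x,\, y/\sqrt D)$ for the model $E^{D}: Dy^2 = f(x)$. Applying $\sigma$ to the coefficients of $\phi$ gives the cocycle relation $\phi^{\sigma} = [-1]\circ\phi$ that characterizes a quadratic twist. Consequently, for any $P \in E(K)[n]^{-}$ (so $\sigma P = -P$), the image $Q = \phi(P)$ satisfies
\[\sigma Q = \phi^{\sigma}(\sigma P) = \phi\bigl([-1](\sigma P)\bigr) = \phi\bigl([-1][-1]P\bigr) = \phi(P) = Q,\]
so $Q$ is $\sigma$-fixed and therefore lies in $E^{D}(\mathbb{Q})[n]$. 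Since $\phi$ is an isomorphism commuting with $[-1]$, running the same computation through $\phi^{-1}$ shows the correspondence is a bijection, giving $E(K)[n]^{-} \cong E^{D}(\mathbb{Q})[n]$. Combining the two identifications yields the claimed isomorphism.

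The only genuinely delicate point is the identification of the twisting cocycle: one must verify carefully that conjugating the chosen $\phi$ by $\sigma$ produces exactly $[-1]$ and not some other automorphism, since this sign is what pairs the $(-1)$-eigenspace of $E(K)[n]$ with the $\mathbb{Q}$-rational torsion of the twist. Everything else is formal once the eigenspace decomposition is in place; because $\phi$ respects the group law and commutes with $[-1]$, no separate verification of additivity or of the torsion condition is needed beyond the short computation above. I would therefore devote the write-up to pinning down $\phi$ and its Galois conjugate, treating the eigenspace splitting itself as routine.
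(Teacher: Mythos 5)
Your proof is correct and is the standard argument; the paper itself gives no proof of this lemma, merely citing \cite[Corollary 4]{gj} and \cite[Lemma 1.1]{ll}, and the eigenspace decomposition under $\mathrm{Gal}(\mathbb{Q}(\sqrt{D})/\mathbb{Q})$ combined with the cocycle identity $\phi^{\sigma}=[-1]\circ\phi$ is exactly the argument underlying those references. The one point worth keeping explicit in a write-up, which you already flag, is the verification that the $\pm$-eigenspaces intersect trivially (if $\sigma P=\pm P$ simultaneously then $2P=O$, forcing $P=O$ since $n$ is odd), alongside the sign computation for $\phi^{\sigma}$.
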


\begin{cor} \label{3T2} If $E_L$ is a nice elliptic curve, then $E_L(K)[3]$ is trivial for all quadratic extension $K/\mathbb{Q}$.
\end{cor}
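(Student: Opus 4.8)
The plan is to reduce the claim to the two torsion computations already carried out, using the twisting isomorphism of Lemma \ref{twist}. Every quadratic extension $K/\mathbb{Q}$ has the form $K = \mathbb{Q}(\sqrt{D})$ for some $D \in \mathbb{Q}^{\times}$ that is not a square (equivalently, for a squarefree integer $D \neq 1$), so it suffices to show that $E_L(\mathbb{Q}(\sqrt{D}))[3]$ is trivial for each such $D$.

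First I would apply Lemma \ref{twist} with the odd integer $n = 3$, which yields the decomposition
\[ E_L(\mathbb{Q}(\sqrt{D}))[3] \simeq E_L(\mathbb{Q})[3] \oplus E_L^D(\mathbb{Q})[3]. \]
Thus the $3$-torsion over $K$ splits as a direct sum of the rational $3$-torsion of $E_L$ and that of its quadratic twist, and it is enough to show that each summand vanishes.

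The two summands are then handled by results already in hand. By Proposition \ref{3T}, the $3$-divisional polynomial of $E_L$ has no rational root, so $E_L$ has no rational point of order $3$ and $E_L(\mathbb{Q})[3]$ is trivial. By Proposition \ref{torsionoftwist}, the torsion subgroup $T_L^D(\mathbb{Q})$ of the twist is isomorphic to $\mathbb{Z}/2\mathbb{Z} \oplus \mathbb{Z}/2\mathbb{Z}$, which contains no element of order $3$, so $E_L^D(\mathbb{Q})[3]$ is trivial as well. Combining these two vanishings with the displayed isomorphism shows that $E_L(\mathbb{Q}(\sqrt{D}))[3]$ is trivial, as desired.

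The argument has no genuinely hard step, since the real work is already contained in Proposition \ref{3T} (which rests on the Chabauty--Coleman analysis of the genus-$3$ curve $H$) and in Proposition \ref{torsionoftwist}. The one point requiring a moment's care is the compatibility of conventions: Lemma \ref{twist} is phrased for the twist $E^D: Dy^2 = f(x)$, whereas the paper's twist is $E_L^D: y^2 = x(x-D)(x-DL)$. I would note that these agree via the $\mathbb{Q}$-isomorphism $(x,y) \mapsto (Dx, D^2 y)$ carrying $Dy^2 = f_L(x)$ to $y^2 = f_L^D(x)$, so that Lemma \ref{twist} indeed applies to the twist as defined here; the hypothesis that $n$ be odd is satisfied because $3$ is odd.
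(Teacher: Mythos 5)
Your proof is correct and follows essentially the same route as the paper: both decompose $E_L(\mathbb{Q}(\sqrt{D}))[3]$ via Lemma \ref{twist} and invoke Proposition \ref{3T} and Proposition \ref{torsionoftwist} to kill the two summands. Your extra check that the two twist conventions agree via $(x,y)\mapsto(Dx,D^2y)$ is a worthwhile detail the paper leaves implicit.
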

\begin{proof}
As $E_L(\mathbb{Q}[3]$ and $E^D_L(\mathbb{Q})[3]$ are both trivial, the corollary follows from Lemma \ref{twist}.
\end{proof}

\noindent \textbf{Proof of Theorem \ref{torsion}} 
\begin{proof}
The proof of Theorem \ref{torsion} now follows. Let $K/\mathbb{Q}$ be a quadratic extension. The group $E_L(K)$ already contains the Klein four group. By Proposition \ref{4T}, the group $E_L(K)$ has a point of order $4$ exactly when $K =  \mathbb{Q}(\sqrt{1-L})$. By Corollary \ref{3T2}, the group $E_L(K)$ does not have a point of order $3$. Hence, the theorem is proven by Mazur's Theorem.
\end{proof}

In the following, we will look at points of $E_L(\mathbb{Q})$ of infinite order. As the torsion points  of $E_L$ over quadratic extensions $K/\mathbb{Q}$ are explicitly given, points of infinite order can be readily found. Indeed, the following proposition can be easily verified.

\begin{prop} If $L$ is defined by 
\[L = \dfrac{(u^2+3)(u^2-1)}{4u^2},\]
where $u \in \mathbb{Q} - \{0, \pm 1\}$, then $(s, \sqrt{f_L(s)})$ is a rational point of infinite order of the nice elliptic curve $E_L$.
\end{prop}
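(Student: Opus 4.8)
The plan is to produce the point explicitly, verify that its $y$-coordinate is rational by exhibiting $f_L$ as a perfect square at a critical point, and then rule out finite order using the torsion computation already established. First I would record the two roots of $f'_L(x)=3x^2-2(1+L)x+L$, namely $\frac{(1+L)\pm W}{3}$ with $W=\sqrt{L^2-L+1}$; these lie in $\mathbb{Q}$ precisely because $E_L$ is nice. Under the parametrization $L=\frac{(u^2+3)(u^2-1)}{4u^2}$ a short computation gives $L^2-L+1=\big(\tfrac{u^4+3}{4u^2}\big)^2$, so $W=\frac{u^4+3}{4u^2}$, and the two roots become
\[ r=\frac{u^2-1}{2u^2}, \qquad s=\frac{u^2+3}{6}, \]
with $r<s$ since $W>0$.

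Next I would evaluate $f_L$ at the root $r$. Substituting $r=\frac{u^2-1}{2u^2}$, the three factors simplify to $r=\frac{u^2-1}{2u^2}$, $r-1=\frac{-(u^2+1)}{2u^2}$, and $r-L=\frac{-(u^4-1)}{4u^2}$, whence
\[ f_L(r)=r(r-1)(r-L)=\frac{(u^2-1)(u^2+1)(u^4-1)}{16u^6}=\frac{(u^4-1)^2}{16u^6}=\left(\frac{u^4-1}{4u^3}\right)^2. \]
This is a rational square, so $P=\left(\frac{u^2-1}{2u^2},\,\frac{u^4-1}{4u^3}\right)$ is a genuine $\mathbb{Q}$-rational point of $E_L$. (At the other critical point one finds $f_L(s)=\frac{-(u^4-9)^2}{432u^2}<0$, so that root instead yields a point defined over $\mathbb{Q}(\sqrt{-3})$, which is the origin of the $\mathbb{Q}(\sqrt{-3})$-points appearing in parts (iii)--(iv) of Theorem \ref{main}.) That a square must appear is not accidental: writing $\tau=L-W$ one has $f_L(r)=\frac{-[(\tau-2)(\tau+1)]^2}{27(2\tau-1)}$, and combining the correspondence $L=\frac{\tau^2-1}{2\tau-1}$ of Proposition \ref{L} with the requirement that $-3(2\tau-1)$ be a square forces $2\tau-1=-3/u^2$, which is exactly the displayed formula for $L$.

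Finally I would verify that $P$ has infinite order, and here the earlier torsion results do all the work. Since $u\neq 0,\pm1$ we have $u^4\neq 1$, so $f_L(r)=\big(\tfrac{u^4-1}{4u^3}\big)^2\neq 0$; thus the $y$-coordinate of $P$ is nonzero and $P$ is none of the four $2$-torsion points $\infty,(0,0),(1,0),(L,0)$. As established earlier in this section, $E_L(\mathbb{Q})_{\mathrm{tors}}\cong\mathbb{Z}/2\mathbb{Z}\oplus\mathbb{Z}/2\mathbb{Z}$ and therefore consists of exactly those four points, so any rational point that is not $2$-torsion cannot be torsion at all. Hence $P$ has infinite order.

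The only genuine computation is the perfect-square identity for $f_L$ at the critical point, and even this reduces to a short simplification once the roots are written out explicitly; it is the step that singles out the sub-family $L=\frac{(u^2+3)(u^2-1)}{4u^2}$. Once rationality is secured, the infinite-order conclusion requires nothing beyond the $2$-torsion structure, so I do not expect any real obstacle.
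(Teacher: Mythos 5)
The paper offers no proof of this proposition beyond ``the following proposition can be easily verified,'' so there is no argument on that side to compare with; your explicit verification is the right thing to supply, and the computations check out: $L^2-L+1=\bigl(\tfrac{u^4+3}{4u^2}\bigr)^2$, the critical points are $\tfrac{u^2-1}{2u^2}$ and $\tfrac{u^2+3}{6}$, the value of $f_L$ at the former is $\bigl(\tfrac{u^4-1}{4u^3}\bigr)^2$, and infinite order then follows from the already-established $E_L(\mathbb{Q})_{\mathrm{tors}}\cong\mathbb{Z}/2\mathbb{Z}\oplus\mathbb{Z}/2\mathbb{Z}$ together with $u^4\neq 1$. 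One small addition worth a line: you should confirm that these $L$ actually define nice elliptic curves, i.e.\ $L\neq 0,1$ (which holds since $u^2\neq 1$ and $u^2\neq 3$ for rational $u$) and $L^2-L+1\in\mathbb{Q}^2$ (your formula for $W$), since the torsion results you invoke are stated only for nice curves.

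The one thing you should not leave buried in a parenthesis is that your computation contradicts the statement as printed. With the paper's own convention $r<s$, the rational critical point is $r=\tfrac{u^2-1}{2u^2}$, the local maximum, where $f_L(r)=\bigl(\tfrac{u^4-1}{4u^3}\bigr)^2>0$; at the larger root one has $f_L(s)=-\tfrac{(u^4-9)^2}{432u^2}<0$, so $(s,\sqrt{f_L(s)})$ is never a rational point --- it lies in $E_L(\mathbb{Q}(\sqrt{-3}))\setminus E_L(\mathbb{Q})$. The proposition (and part (iii) of Theorem \ref{main}) should therefore read $(r,\sqrt{f_L(r)})$, and what you have proved is that corrected statement. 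This reading is consistent with the rest of Section \ref{rational}: the linear-independence argument there takes $\sigma(Q)=-Q$ for $Q=(s,\sqrt{f_L(s)})$, i.e.\ it already treats $s$ as the critical point whose $y$-coordinate is a nonzero multiple of $\sqrt{-3}$. Aside from making that correction explicit, your proof is complete.
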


\begin{prop} The points $P=(r,\sqrt{f(r)})$ and $Q=(s,\sqrt{f(s)})$ are linearly independent points of infinite order of $E_L(K)$, where $K = \mathbb{Q}(\sqrt{-3})$.
\end{prop}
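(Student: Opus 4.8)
The plan is to first verify that $P$ and $Q$ genuinely lie in $E_L(K)$ with $K=\mathbb{Q}(\sqrt{-3})$, then to show that each has infinite order, and finally to deduce their independence from the action of $\mathrm{Gal}(K/\mathbb{Q})$.

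For the first step I would compute the two critical values of $f_L$ simultaneously. Writing $f_L$ in depressed form, its two critical values are conjugate expressions whose product equals $-1/27$ times the discriminant of $f_L$; since $f_L$ has roots $0,1,L$, that discriminant is $L^2(1-L)^2$, so
\[ f_L(r)\,f_L(s) = -\frac{L^2(1-L)^2}{27} = -3\left(\frac{L(1-L)}{9}\right)^2. \]
Because $L^2-L+1$ is a rational square, $r$ and $s$ are rational, so $f_L(r)$ and $f_L(s)$ are rational numbers whose product is $-3$ times a rational square. Invoking the preceding proposition, under the stated parametrization $f_L(s)$ is itself a rational square, and the displayed identity then forces $f_L(r)$ to be $-3$ times a rational square. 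Hence $\sqrt{f_L(s)}\in\mathbb{Q}$ and $\sqrt{f_L(r)}\in\mathbb{Q}(\sqrt{-3})\setminus\mathbb{Q}$, which shows $Q\in E_L(\mathbb{Q})$ and $P\in E_L(K)$.

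Next I would argue that both points have infinite order. The point $Q$ is of infinite order by the preceding proposition. For $P$, note that $r$ is a critical point of $f_L$ and $f_L$ has distinct roots, so $f_L(r)\neq 0$ and $P$ is not a $2$-torsion point. By Theorem \ref{torsion} the torsion of $E_L(K)$ is contained in $\mathbb{Z}/2\mathbb{Z}\oplus\mathbb{Z}/4\mathbb{Z}$, whose nontrivial elements have $x$-coordinates in $\{0,1,L\}$ (order $2$) or $1\pm\sqrt{1-L}$ (order $4$). The coordinate $r$ is rational, whereas $1\pm\sqrt{1-L}$ is irrational since $1-L$ is not a square by Lemma \ref{4t1}; as $r\notin\{0,1,L\}$ as well, $P$ is none of the torsion points and hence has infinite order.

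Finally, for linear independence let $\sigma$ generate $\mathrm{Gal}(K/\mathbb{Q})$. Since $Q$ is rational, $\sigma Q=Q$, while $\sigma P=(r,-\sqrt{f_L(r)})=-P$. If $aP+bQ=O$ for integers $a,b$, applying $\sigma$ and adding the two relations gives $2bQ=O$, and subtracting gives $2aP=O$; as $P$ and $Q$ both have infinite order, $a=b=0$. Thus $P$ and $Q$ are linearly independent points of infinite order in $E_L(K)$. The main obstacle is the first step: one must verify not merely that the product $f_L(r)f_L(s)$ is $-3$ times a square, but that $f_L(r)$ individually is, and this genuinely requires the special parametrization of the preceding proposition — for a general nice $L$ the squarefree parts of $f_L(r)$ and $f_L(s)$ need not each be $1$ or $-3$, so $\sqrt{f_L(r)}$ need not lie in $\mathbb{Q}(\sqrt{-3})$ at all.
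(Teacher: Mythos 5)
Your proof is correct, and its core --- the Galois-conjugation argument for independence --- is structurally the same as the paper's: the paper likewise observes that a putative relation $mP+nQ=0$, combined with its image under the nontrivial $\sigma\in\mathrm{Gal}(K/\mathbb{Q})$, forces a torsion relation on a point of infinite order. (The paper asserts $\sigma(Q)=-Q$; under the parametrization of the preceding proposition it is $Q=(s,\sqrt{f_L(s)})$ that is rational and $P$ that satisfies $\sigma(P)=-P$, so your assignment is the one consistent with that proposition.) What you add, and what the paper omits entirely, is the verification that $P$ and $Q$ actually lie in $E_L(\mathbb{Q}(\sqrt{-3}))$: your identity $f_L(r)f_L(s)=-L^2(1-L)^2/27$ together with the hypothesis that $f_L(s)$ is a rational square is exactly what forces $f_L(r)$ to be $-3$ times a square. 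Your closing caveat is well taken and points at a genuine defect in the paper: the proposition and the corollary after it are stated for an \emph{arbitrary} nice $E_L$, but for general nice $L$ only the product $f_L(r)f_L(s)$ is $-3$ times a square. For instance, $L=8/5$ is nice (since $L^2-L+1=(7/5)^2$), with $r=2/5$, $s=4/3$, $f_L(r)=36/125$ and $f_L(s)=-16/135$, whose squarefree parts are $5$ and $-15$; there neither point lies in $\mathbb{Q}(\sqrt{-3})$, and the statement holds only under the parametrization of the preceding proposition (or whenever one critical value is a square), exactly as you say. Your infinite-order step is also slightly more careful than the paper's, which invokes the Klein-four torsion of $E_L(K)$ without excluding the possible coincidence $\mathbb{Q}(\sqrt{-3})=\mathbb{Q}(\sqrt{1-L})$; your remark that the order-$4$ points have irrational abscissae $1\pm\sqrt{1-L}$ disposes of that case.
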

\begin{proof}
Since the torsion subgroup of $E_L(K)$ is isomorphic to $\mathbb{Z}/2\mathbb{Z} \oplus \mathbb{Z}/2\mathbb{Z}$ by the corollary above and $r,s \neq 0,1,$ or $L$, it follows that $P$ and $Q$ are indeed points of infinite order. 

Now, suppose on the contrary that $P$ and $Q$ are linearly dependent. There exist integers $n$ and $m$ such that $mP + nQ = 0$. Let $\sigma$ be the nontrivial element of $\mathrm{Gal}(K/\mathbb{Q})$. It can be seen that $\sigma(Q) = -Q$. Then $mP + nQ = 0$ implies that $mP - nQ = 0$, which is a contradiction as it would imply that $P$ has finite order.
\end{proof}

\begin{cor} For any nice elliptic curve $E_L$, the rank of $E_L(\mathbb{Q}(\sqrt{-3}))$ is at least $2$, with generators $(r, \sqrt{f_L(r)})$ and $(s,\sqrt{f(s)})$.
\end{cor}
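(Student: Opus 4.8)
The plan is to read the corollary directly off the preceding proposition. By the Mordell--Weil theorem the group $E_L(K)$ with $K=\mathbb{Q}(\sqrt{-3})$ is finitely generated, so it decomposes as $E_L(K)\cong T_L(K)\oplus\mathbb{Z}^{\rho}$, where $T_L(K)$ is the finite torsion subgroup and $\rho$ is, by definition, the rank. The number $\rho$ equals the maximal size of a $\mathbb{Z}$-linearly independent set of points of infinite order in $E_L(K)$; thus to bound $\rho$ from below by $2$ it suffices to exhibit two such points.

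First I would invoke the preceding proposition, which provides exactly this: the points $P=(r,\sqrt{f_L(r)})$ and $Q=(s,\sqrt{f_L(s)})$ lie in $E_L(K)$, have infinite order, and are $\mathbb{Z}$-linearly independent. Consequently the subgroup $\langle P,Q\rangle$ is free of rank $2$ and injects into the free part $\mathbb{Z}^{\rho}$ of $E_L(K)$, forcing $\rho\ge 2$. This is the entire content of the corollary: ``two linearly independent points of infinite order'' translates into ``rank at least $2$,'' so there is no genuine obstacle here — the work was already carried out in the two preceding propositions (the torsion computation guaranteeing infinite order, and the Galois-descent argument $\sigma(Q)=-Q$ guaranteeing independence).

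Two small points merit a remark. The word ``generators'' in the statement should be read as generators of the rank-$2$ free subgroup that witnesses the lower bound, not necessarily of the full group $E_L(K)$ modulo torsion; no upper bound on $\rho$ is asserted, so the phrasing is an honest description only of the subgroup we produce. Second, the one fact requiring a line of checking is that $P$ and $Q$ actually lie in $E_L(K)$, i.e. that $\sqrt{f_L(r)},\sqrt{f_L(s)}\in\mathbb{Q}(\sqrt{-3})$; this membership is part of the cited proposition, and it is consistent with the identity $f_L(r)f_L(s)=-\mathrm{disc}(f_L)/27=-L^2(1-L)^2/27=-3\bigl(L(1-L)/9\bigr)^2$, which already shows that the product $\sqrt{f_L(r)}\sqrt{f_L(s)}$ lies in $\mathbb{Q}(\sqrt{-3})$.
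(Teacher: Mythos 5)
Your argument is correct and matches the paper's (implicit) proof exactly: the corollary is an immediate consequence of the preceding proposition, which supplies the two linearly independent points of infinite order $(r,\sqrt{f_L(r)})$ and $(s,\sqrt{f_L(s)})$ in $E_L(\mathbb{Q}(\sqrt{-3}))$, and two such points force the rank to be at least $2$. Your added checks (the meaning of ``generators'' and the discriminant identity $f_L(r)f_L(s)=-L^2(1-L)^2/27$ confirming membership in $\mathbb{Q}(\sqrt{-3})$) are sound and only strengthen the write-up.
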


\section{Points on hyperelliptic curves with Chabauty
} \label{chab}

In this section, we will begin with providing a short overview of hyperelliptic curves and Chabauty-Coleman's Method. Our brief overview follows from \cite{cohen2}, \cite{jennifer}, \cite{mp}, and \cite{stoll}.

Faltings shows that if $C/\mathbb{Q}$ is a smooth projective curve of genus $g:= \mathrm{genus}(C) \geq 2$, then the set $C(\mathbb{Q})$ of rational points of $X$ is finite \cite{faltings}. Points in $C(\mathbb{Q})$ can be obtained by naively searching for points on its affine singular model $f(x,y) = 0$. The naive search should be quick as the size of the rational points is expected to be small with respect to the coefficients of $f(x,y)$ \cite{ih}, unlike in the genus $1$ case. Of course, the difficulty lies in proving that we have the complete set of rational points.

We say that $C$ is hyperelliptic if there exists a degree-$2$ map $C \rightarrow \mathbb{P}^1$ and $g(C) \geq 2$ (following \cite{hindryandsilverman}). In particular, if $f(x) \in \mathbb{Q}[x]$ is separable, then the affine equation $C: y^2 = f(x)$ defines a hyperelliptic curve. The projective closure of $X$ in $\mathbb{P}^2$ is singular at the infinite points. Its smooth projective model is embedded in $\mathbb{P}^{g(C)+2}$ (see \cite[Exercise II.2.14]{silverman1} and \cite[Section A.4.5]{hindryandsilverman}), which is not practical in the search for rational points. On the other hand, the homogenization $Y^2 = Z^{2g+2}f(X/Z)$ is a smooth model of $C$ in the weighted projective plane $\mathbb{P}_{1,g+1,1}$, where the variables $X,Y$, and $Z$ have weights $1,g+1$, and $1$, respectively.

By \cite[Theorem II.2.4(c)]{silverman1}, for each subgroup $G$ of $\mathrm{Aut}(C/\mathbb{Q})$, there exists a smooth curve $C_G:= G\backslash C$ over $\mathbb{Q}$ and a morphism $\phi: C \rightarrow C_G$ defined over $\mathbb{Q}$ of degree $[\mathrm{Aut}(H):G]$ such that $\phi^*\mathbb{Q}(H) = \mathbb{Q}(C_G)$, where $\phi^*$ is the pullback map. Here, we say that $C_G$ is the quotient of $H$ by the subgroup $G$. Via the quotient map
\begin{equation}
\phi: C \rightarrow C_G,
\end{equation}
one can compute the points of $C(\mathbb{Q})$ using the pullback $\phi^{-1}(C_G(\mathbb{Q}))$ if $C_G(\mathbb{Q})$ can be computed. In particular, we can work with a curve $C_g$ of genus smaller than that of $C$.

As the set of points of $C(\mathbb{Q})$ does not have a geometric group structure, we use the Abel-Jacobi embedding 
\begin{equation} \label{embedding}
 \iota: C(\mathbb{Q}) \hookrightarrow J(\mathbb{Q}), \hspace{0.2in} P \mapsto [(P)-(P_0)],
 \end{equation}
where $J$ is the Jacobian of $C$ and $P_0$ is a point of $C(\mathbb{Q})$. Here we identify the Jacobian $J(\mathbb{Q})$ with the Picard group $\mathrm{Pic}^0(C/\mathbb{Q})$. By Mordell-Weil Theorem, $J(\mathbb{Q})$ is finitely-generated. In the fortunate case that $J(\mathbb{Q})$ is finite, each point of $[D] \in J(\mathbb{Q})$ can be pulled back to points $P \in C(\mathbb{Q})$ by $(P) = D + (P_0) + \mathrm{div}(f)$, for non-zero functions $f$ with $\mathrm{div}(f) \geq -D - (P_0)$.

When the rank $r$ of $J(\mathbb{Q})$ is non-zero but is less than the genus $g$ of $C$, we can apply the Chabauty's Method. For a prime $p > 2$ of good reduction for $C$, the Abel-Jacobi embedding $\iota$ in $(\ref{embedding})$ induces an isomorphism of $\mathbb{Q}_p$-vector spaces of translation-invariant, holomorphic differentials of dimension $g$ \cite[Proposition 2.2]{milne}:
\begin{equation} \label{cotangent}
H^0(J(\mathbb{Q}_p), \Omega^1) \rightarrow H^0(C(\mathbb{Q}_p), \Omega^1), \hspace{0.2in} \omega_J \mapsto \omega:= \omega_J \vert_{C(\mathbb{Q}_p)},
\end{equation}
which yields isomorphism of tangent spaces of dimension $g$ over $\mathbb{Q}_p$:
\begin{equation} \label{tangent}
H^0(J(\mathbb{Q}_p), \Omega^1)^* \rightarrow H^0(C(\mathbb{Q}_p), \Omega^1)^*.
\end{equation}
Via (\ref{cotangent}), for each $\int \omega_J \in H^0(J(\mathbb{Q}_p), \Omega^1)^*$, there exists $\int \omega \in H^0(C(\mathbb{Q}_p), \Omega^1)^*$ such that
\begin{equation}
\int_{P_0}^P \omega = \int_0^{[(P)-(P_0)]} \omega_J.
\end{equation}
Now from the following chain of maps
\begin{equation}
C(\mathbb{Q}) \xhookrightarrow{P \mapsto [(P)-(P_0)]} J(\mathbb{Q}) \xhookrightarrow{} J(\mathbb{Q}_p) \xrightarrow{D \mapsto \int_D} H^0(J(\mathbb{Q}_p), \Omega^1)^* \rightarrow H^0(C(\mathbb{Q}_p), \Omega^1)^*,
\end{equation}
it follows that $C(\mathbb{Q})$ is mapped into a subspace of $H^0(C(\mathbb{Q}_p), \Omega^1)^*$ of dimension at most $r$.  Hence, as $g - r > 0$, there exists an \textit{annihilating differential} $\omega_A \in H^0(J(\mathbb{Q}_p), \Omega^1)$ such that $\int_0^D \omega_A = 0$ for all $D \in J(\mathbb{Q})$.
Indeed, it can be seen that the integrals $\int \omega_J$ are equal to $0$ on the torsion points of $J(\mathbb{Q}_p)$. Let $Q_1, Q_2, \ldots, Q_r$ be a set generators of the free part of $J(\mathbb{Q})$, and let $\omega_1, \ldots, \omega_g$ be a basis for $H^0(J(\mathbb{Q}_p), \Omega^1)$. The dimension of the null space of the matrix $M$ is at least $g-r > 0$, where $M$ is defined by
\begin{equation}
M = \left(\int_0^{Q_i} \omega_j \right)_{1 \leq i \leq r, 1 \leq j \leq g}.
\end{equation}
Hence, there exists a non-trivial vector $(c_1, c_2, \ldots, c_g) \in \mathbb{Q}_p^g$ such that for each $1 \leq i \leq r$, we have
\begin{equation}
c_1 \int_0^{Q_i} \omega_1 + c_2 \int_0^{Q_i} \omega_2 + \dots + c_g\int_0^{Q_i} \omega_g = 0.
\end{equation}
By linearity, $\int_0^D \omega_A = 0$ for all $D \in J(\mathbb{Q})$, where
\begin{equation} \label{ann}
\omega_A = c_1 \omega_1 + c_2 \omega_2 + \dots + c_g \omega_g.
\end{equation}
As $C(\mathbb{Q})$ is embedded in $J(\mathbb{Q})$, the claim above follows.

Via $(\ref{tangent})$, we can explicitly compute $\omega_A$ over $C(\mathbb{Q}_p)$, and integrals over $C(\mathbb{Q}_p)$ can be easily calculated by using a linear interpolation from $P$ to $Q$:
\begin{equation}
\int_{P}^Q \omega = \int_0^1 \omega(x(t),y(t)) \, dt,
\end{equation}
which is convergent if it's a \textit{tiny integral}, that is, if $P \equiv Q \pmod{p}$. Note that here $t$ is also a uniformizer at $P$.  Assume $C$ is of the form
\begin{equation}
C: y^2 = f(x),
\end{equation}
in which case, a set of basis for $H^0(C(\mathbb{Q}_p), \Omega^1)$ is
\begin{equation}
\dfrac{dx}{2y}, \, \dfrac{xdx}{2y}, \, \dfrac{x^2 dx}{2y}, \, \dots, \, \dfrac{x^{g-1}dx}{2y}.
\end{equation}
Let $P_0, \ldots, P_k$ be the set of known points of $C(\mathbb{Q})$. Any non-trivial vector $(c_1, c_2, \ldots, c_g)$ in $\mathrm{Nul}(M)$ will give us an annihilating differential $\omega_A$, where
\begin{equation}
M = \left(\int_{P_0}^{P_i} \dfrac{x^j dx}{2y} \right)_{1 \leq i \leq k, 1 \leq j \leq g}.
\end{equation}
By reordering if necessary, choose a subset $S=\{P_0, P_1, \ldots, P_n\}$ of $\{P_0,P_1, \ldots, P_k\}$ that are mapped bijectively to $C(\mathbb{F}_p)$. Finally, the set $C(\mathbb{Q})$ will be a subset of the zeroes for the function
\begin{equation}
f(z) = \int_{P_i}^z \omega_A
\end{equation}
for each point $P_i$ of $S$.

\section{Points on the hyperelliptic curve $H$} \label{proof}

Recall the curve $H$ given in $(\ref{thecurve})$. In this section we will show that $H(\mathbb{Q})$has exactly $12$ points. All computations are in this section are done with the aid of MAGMA.

Using MAGMA, we find that the automorphism group Aut$(H)$ is isomorphic to the dihedral group $D_{12}$ of order $12$. Each of the $7$ elements $g$ of order $2$ in Aut$(H)$ corresponds to an extension of function fields $\mathbb{Q}(H)/K_g$ of degree $2$. There exists a smooth curve $H_g/\mathbb{Q}$ and a morphism $\phi_g: H \rightarrow H_g$ of degree $2$ defined over $\mathbb{Q}$ such that $\phi_g^*\mathbb{Q}(H') = K_g$, where $\phi_g^*$ is the pullback map. Here, we say that $H_g$ is the quotient of $H$ by the subgroup generated by $g$. Three of the $7$ elements of order $2$ yield a quotient $\phi_g: H \rightarrow H_g$ with $H_g$ having genus $2$. In fact, each the three morphisms $\phi_g$ is an $\acute{e}$tale cover by the Riemann-Hurwitz formula \cite[Theorem II.5.9]{silverman1} as it is an unramified morphism of smooth curves. In particular we have the $\acute{e}$tale double cover 
\begin{equation}
\varphi: H \rightarrow H_q,
\end{equation}
where $H_q$ is defined by
\begin{equation}
H_q: Y^2 = 4X^5 - 7X^4Z - 2X^3Z^2 + 16X^2Z^3 + 8XZ^4 + Z^5,
\end{equation}
and the morphism $\varphi$ is defined by
\begin{equation} \label{map}
\varphi(X,Y,Z) = (XZ^5,X^2YZ^{12}-YZ^{14},-X^2Z^4-2XZ^5-Z^6).
\end{equation}

\begin{lem} There exists an $\acute{e}$tale double cover $\varphi: H \rightarrow H_q$, where $H_q$ is a hyperelliptic curve of genus $2$.
\end{lem}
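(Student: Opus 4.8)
The plan is to realize $\varphi$ as an honest quotient of $H$ by an involution, and then to read off all three claimed properties---degree, genus, and \'etaleness---from the structure of $\mathrm{Aut}(H) \cong D_{12}$ together with the Riemann--Hurwitz formula. First I would single out the involution $g \in \mathrm{Aut}(H)$ whose associated quotient is the curve $H_q$ appearing in the statement, and apply \cite[Theorem II.2.4(c)]{silverman1} to the subgroup $G = \langle g \rangle$ of order $2$. This immediately produces a smooth curve $H_q := G\backslash H$ over $\mathbb{Q}$ together with a degree-$2$ morphism $\varphi \colon H \to H_q$ defined over $\mathbb{Q}$. It then remains to verify that the explicit coordinate formula (\ref{map}) realizes this quotient map, and that the displayed model of $H_q$ is the one obtained from the subfield $\mathbb{Q}(H)^{G}$ of $g$-invariant functions.

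Next I would determine the genus of $H_q$. Since $H_q$ is presented as $Y^2 = f(X,Z)$ with $f$ a separable quintic, its smooth model is hyperelliptic of genus $(5-1)/2 = 2$; this already supplies the ``hyperelliptic curve of genus $2$'' part of the statement. This is the one step where an explicit computation cannot be avoided: in practice it is carried out in MAGMA, which simultaneously confirms that $\deg\varphi = 2$, that the displayed model of $H_q$ is correct, and that $g$ is indeed one of the three order-$2$ elements of $\mathrm{Aut}(H)$ whose quotient has genus $2$.

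With the degree and the two genera in hand, the \'etale property is forced rather than checked by hand. Applying Riemann--Hurwitz \cite[Theorem II.5.9]{silverman1} to $\varphi$ with $\deg\varphi = 2$, $\mathrm{genus}(H) = 3$, and $\mathrm{genus}(H_q) = 2$ yields
\[
2\cdot 3 - 2 = 2\bigl(2\cdot 2 - 2\bigr) + \sum_{P \in H}(e_P - 1),
\]
i.e.\ $4 = 4 + \sum_P (e_P - 1)$, so $\sum_P (e_P - 1) = 0$. Hence every ramification index equals $1$, the morphism $\varphi$ is unramified, and an unramified morphism of smooth curves over $\mathbb{Q}$ is \'etale. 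I expect the main obstacle to be precisely the explicit computation of the previous paragraph---identifying the correct involution $g$ and verifying that its quotient under (\ref{map}) is exactly the stated genus-$2$ curve $H_q$---since this is a concrete but nontrivial calculation in the function field of $H$ that is most cleanly delegated to MAGMA. Once the quotient is pinned down with the right genus, hyperellipticity and \'etaleness follow immediately from the shape of the model and from Riemann--Hurwitz, respectively.
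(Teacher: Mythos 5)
Your proposal is correct and follows essentially the same route as the paper: identify the order-$2$ automorphisms of $H$ via MAGMA, form the quotient by one whose quotient curve has genus $2$, and deduce \'etaleness from the Riemann--Hurwitz count $2\cdot 3-2 = 2(2\cdot 2-2)$ forcing the ramification term to vanish. You merely make explicit the Riemann--Hurwitz computation that the paper asserts in one line, so there is nothing to add.
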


The hyperelliptic curve $H_q$ has genus $2$ and its Jacobian $J:= \mathrm{Jac}(H_q)$ has rank $1$, so the Chabauty-Coleman's Method is applicable. The point $P:= [(0,-1) - (\infty)] \in J$ has infinite order. Applying the Magma function \verb+Chabauty(P)+, we obtain Lemma \ref{quotient}. Pulling the points of $H_q$ back to the points of $H$ via the map in \ref{map}, we obtain Proposition \ref{hyper}.

\begin{lem} \label{quotient} Let $H_q$ be the hyperelliptic curve defined by its affine model
\[H_q: y^2 = 4x^5 - 7x^4 - 2x^3 + 16x^2 + 8x + 1.\]
The complete set of rational points of $H_q$ is the $7$ points
\[\{\infty, (0,\pm 1), (-1,0), (-1/4,0), (2,\pm 9) \}.\]
\end{lem}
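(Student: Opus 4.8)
The plan is to invoke the Chabauty-Coleman machinery assembled in Section \ref{chab}. Since $H_q$ has genus $g=2$ while its Jacobian $J=\mathrm{Jac}(H_q)$ has Mordell-Weil rank $r=1$, the inequality $r<g$ holds and an annihilating differential is guaranteed to exist. First I would record the lower bound $\#H_q(\mathbb{Q})\geq 7$ by substituting each listed pair into the affine equation and noting that the quintic, being of odd degree, contributes a single rational point at infinity. This reduces the problem to producing a matching upper bound.

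Next I would pin down the arithmetic of $J(\mathbb{Q})$. Using a $2$-descent (as implemented in Magma) I would confirm that the rank is exactly $1$ and exhibit the divisor class $P=[(0,-1)-(\infty)]$ as a generator of the free part of $J(\mathbb{Q})$ modulo torsion; that $P$ has infinite order follows once the rank bound is in hand. Since the Coleman integrals vanish on torsion, a single non-torsion generator suffices to cut out the annihilating differential.

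With a prime $p$ of good reduction fixed, I would then work in the basis $\tfrac{dx}{2y},\tfrac{x\,dx}{2y}$ for $H^0(H_q(\mathbb{Q}_p),\Omega^1)$ and compute the $1\times 2$ matrix $\bigl(\int_0^{P}\tfrac{dx}{2y},\ \int_0^{P}\tfrac{x\,dx}{2y}\bigr)$ of Coleman integrals. A nonzero null vector $(c_1,c_2)$ of this matrix determines $\omega_A=c_1\tfrac{dx}{2y}+c_2\tfrac{x\,dx}{2y}$, which annihilates every class in $J(\mathbb{Q})$. For each residue disk of $H_q(\mathbb{F}_p)$ I would expand the locally analytic function $z\mapsto\int_{P_i}^z\omega_A$ as a power series in a uniformizer at a known point of that disk and bound its zeros through its Newton polygon; summing these local bounds yields an upper bound for $\#H_q(\mathbb{Q})$.

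The hard part will be arranging this upper bound to equal $7$ exactly. The crude Chabauty estimate can exceed the number of known rational points in a given residue disk, in which case I would refine the analysis---choosing a more favorable prime $p$, sharpening the Newton-polygon count so that each disk provably contains no more zeros than known points, or, if necessary, combining congruence data at several primes via a Mordell-Weil sieve to eliminate any spurious candidates. Once every residue disk is accounted for by the listed points, the bound $\#H_q(\mathbb{Q})\leq 7$ closes the gap and the lemma follows.
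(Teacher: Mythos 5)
Your proposal follows essentially the same route as the paper: the paper also notes that $H_q$ has genus $2$ and Jacobian of rank $1$, takes $P=[(0,-1)-(\infty)]$ as the point of infinite order, and then simply invokes Magma's \verb+Chabauty(P)+ routine, which internally carries out exactly the steps you describe (Coleman integrals against the generator, an annihilating differential, residue-disk zero counts, and a Mordell--Weil sieve if needed). Your version is a correct, more explicit unpacking of what the paper delegates to the computer algebra system.
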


\begin{prop} \label{hyper} Let $H$ be the hyperelliptic curve defined by its affine model
\[H: y^2 = (x^2+x+1)(x^6+3x^5-5x^3+3x+1).\]
The complete set of rational solutions in the weighted projected plane $\mathbb{P}_3^2(\mathbb{Q})$ are the $12$ points
\[\scalemath{0.9}{\{\pm \infty, (2/3,\pm 1/27,-1/3),(1/2,\pm 1/16,-1/2),(0, \pm 1,-1), (1,\pm 3,1),(1/3,\pm 1/27,-2/3)\}.}\]
\end{prop}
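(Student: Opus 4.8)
The plan is to leverage the étale double cover $\varphi: H \to H_q$ together with the already-established Lemma \ref{quotient}, which gives the complete list of seven rational points on the genus-$2$ quotient $H_q$. The key structural fact is that $H(\mathbb{Q})$ maps into $H_q(\mathbb{Q})$ under $\varphi$, so every rational point of $H$ lies in one of the seven fibers $\varphi^{-1}(R)$ for $R \in H_q(\mathbb{Q})$. First I would verify that Lemma \ref{quotient} is correctly applied: since $J = \mathrm{Jac}(H_q)$ has rank $1 < 2 = \mathrm{genus}(H_q)$, Chabauty-Coleman applies, and the Magma computation \verb+Chabauty(P)+ with the infinite-order point $P = [(0,-1)-(\infty)]$ certifies that the seven listed points are \emph{all} of $H_q(\mathbb{Q})$. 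This is the crux that makes the rest a finite check.

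Next I would pull each of the seven points of $H_q(\mathbb{Q})$ back through the explicit coordinate map $\varphi$ in (\ref{map}) to determine the rational points of $H$ lying above them. Because $\varphi$ is a degree-$2$ cover, each fiber $\varphi^{-1}(R)$ contains at most two geometric points, so for each $R$ I would solve for the $H$-coordinates and decide whether the preimages are defined over $\mathbb{Q}$ or only over a quadratic extension. The fiber over $\infty \in H_q$ should account for $\pm\infty$ on $H$; the fibers over the Weierstrass points $(-1,0)$ and $(-1/4,0)$ and over the pairs $(0,\pm 1)$, $(2,\pm 9)$ should produce the remaining rational points. Collecting the rational preimages and matching them against the twelve listed points $\{\pm\infty, (2/3,\pm 1/27,-1/3), \ldots\}$ in $\mathbb{P}^2_3(\mathbb{Q})$ completes the enumeration.

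The main obstacle I anticipate is \textbf{not} the Chabauty step itself (which is delegated to the certified Magma output of Lemma \ref{quotient}) but rather the careful bookkeeping of the pullback in weighted projective coordinates. The map $\varphi$ in (\ref{map}) has high-degree monomials, so I would need to be attentive to which preimages descend to $\mathbb{Q}$ versus which become irrational, and to correctly interpret points at the weighted infinity of $\mathbb{P}^2_{(1,4,1)}$. A subtlety is confirming that no rational point of $H$ maps to a point of $H_q$ that is \emph{not} rational — but this cannot happen, since $\varphi$ is defined over $\mathbb{Q}$ and hence sends $H(\mathbb{Q})$ into $H_q(\mathbb{Q})$, so the finiteness of $H_q(\mathbb{Q})$ forces finiteness of $H(\mathbb{Q})$ and the fiberwise search is exhaustive. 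Once the twelve points are produced as the union of rational fibers, the proposition follows, and I would cross-check the count against the fact that $H$ has the expected symmetry ($\mathrm{Aut}(H) \cong D_{12}$) to ensure none were missed.
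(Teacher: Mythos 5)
Your proposal follows essentially the same route as the paper: the paper likewise reduces the problem to the genus-$2$ quotient $H_q$ via the \'etale double cover $\varphi$, certifies $H_q(\mathbb{Q})$ by Chabauty--Coleman (Lemma \ref{quotient}), and then computes the rational points of $H$ by pulling back each of the seven fibers through the explicit map (\ref{map}) (this is exactly what the Magma code in the appendix does with \texttt{P @@ phi}). Your added care about which preimages are rational versus quadratic and about the points at weighted infinity is the right bookkeeping, and the argument is correct.
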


\section{Some words on Galois representation} \label{galois}
Let $E_L$ be a nice elliptic curve defined by $E_L: y^2 = f(x)$ for some nice cubic polynomial with distinct roots of the form $f(x) = x(x-a)(x-b)$. Let $L = b/a$. The elliptic curve $E_L$ defined by
\[E_L: y^2 = x(x-1)(x-L)\]
is $K$-isomorphic to $E_L$ for some finite extension $K/\mathbb{Q}$ of degree at most $2$. The $j$-invariant of $E_L$ is
\begin{equation}
j(E_L) = 16^2 \dfrac{(L^2-L+1)^3}{(L^2-L)^2},
\end{equation}
which is a nonzero element of $\mathbb{Q}^2$ as $L$ is a rational number not equal to $0$ or $1$, and $L^2-L + 1$ is a perfect square. Since the elliptic curves in each $\mathbb{C}$-isomorphism class have the same $j$-invariant, we find that $j(E_L) = j(E^D_L)$. Among the $13$ isomorphism classes of elliptic curves over $\mathbb{Q}$, only one has $j$-invariant in $\mathbb{Q}^2$, namely, $j=0$ (\cite{silverman2}, Appendix A and \cite{cohen}, pp. 383). Hence, it follows that $E_L$ does not complex multiplication for any nice cubic polynomials $f(X)$ with distinct roots.

Let $G$ be the absolute Galois group Gal$(\overline{\mathbb{Q}}/\mathbb{Q})$, where $\overline{\mathbb{Q}}$ is the algebraic closure of $\mathbb{Q}$. Let $\rho_{f,\ell}: G \rightarrow \mathrm{Aut}(T_{\ell}(E_L)$ be the $\ell$-adic representation associated to $E_L$. It is known that the representation $\rho_{f,\ell}$ is surjective for all except perhaps for finitely many primes $\ell$ \cite{serre}. There is data suggesting that $\rho_{f,\ell}$ is surjective for all odd primes $\ell$.

\section*{Acknowledgment}
I am grateful to Jeremy Rousse for suggesting me to use MAGMA \cite{magma} and for explaining certain parts on elliptic curves.

\section*{Appendix}
\begin{verbatim}
P<X>:=PolynomialRing(Rationals());
C:=HyperellipticCurve((X^2 + X + 1)*(X^6 + 3*X^5 - 5*X^3 + 3*X + 1));
AutC:=Automorphisms(C); AutC[11];
G:=AutomorphismGroup(C,[AutC[11]]);
Cg, phi :=CurveQuotient(G); Cg;
Points:=Points(Cg : Bound:=1000);
for P in Points do
    preimageofP:= P @@ phi;
    RationalPoints(preimageofP);
end for; 
\end{verbatim}

\bibliography{ref}

\end{document}